\documentclass[12pt]{article}
\usepackage[english]{babel}
\usepackage[utf8]{inputenc}
\usepackage{amstext}
\usepackage{todonotes}
\usepackage{fancyhdr}
\usepackage{amsfonts,graphicx,bezier, amssymb}
\usepackage{amsmath}
\usepackage{caption}
\usepackage{mathrsfs}
\usepackage{mathtools}
\usepackage{tikz}
\usepackage{rotating}
\usepackage[numbers]{natbib}
\usetikzlibrary{arrows.meta}
\newenvironment{prf}{\noindent{\bf{Proof:}}~~}{\hfill\rule{1ex}{1ex}\vskip1.5ex}

\newcommand{\Z}{\mathbb Z}

\newcommand{\beqa}{\begin{eqnarray}}
\newcommand{\enqa}{\end{eqnarray}}
\newcommand{\beq}{\begin{eqnarray*}}
\newcommand{\enq}{\end{eqnarray*}}

\newcommand{\noi}{\noindent}

\newtheorem{rem}{Remark}[section]

\newtheorem{propn}{Proposition}[section]
\newtheorem{defn}{Definition}[section]
\newtheorem{exam}{{\bf Example}}[section]
\newtheorem{thm}{Theorem}[section]

\newtheorem{lem}{Lemma}[section]

\makeatletter
\providecommand*{\twoheadrightarrowfill@}{%
  \arrowfill@\relbar\relbar\twoheadrightarrow
}
\providecommand*{\twoheadleftarrowfill@}{%
  \arrowfill@\twoheadleftarrow\relbar\relbar
}
\providecommand*{\xtwoheadrightarrow}[2][]{%
  \ext@arrow 0579\twoheadrightarrowfill@{#1}{#2}%
}
\providecommand*{\xtwoheadleftarrow}[2][]{%
  \ext@arrow 5097\twoheadleftarrowfill@{#1}{#2}%
}
\makeatother

\begin{document}
\begin{center}
{\bf\Large   Reduced and coreduced modules with respect to inverse families of ideals}
\end{center}

		 \vspace*{0.2cm}

\begin{center}Sholastica Luambano\footnote{Department of Mathematics, University of Dodoma, P.O. BOX 259, Dodoma, Tanzania} and David Ssevviiri\footnote{Department of Mathematics, Makerere University, P.O. BOX 7062, Kampala, Uganda}$^{,}$\footnote{Corresponding author}  \\
			E-mail:   scholastica.luambano@udom.ac.tz and david.ssevviiri@mak.ac.ug

		\end{center}

	\vspace*{0.1cm}
\begin{abstract}
Let \(R\) be a commutative ring and \(\Phi\) an inverse family of ideals with greatest proper member $\mathcal{L}$. We introduce and study \(\Phi\)-reduced and \(\Phi\)-coreduced modules, extending the corresponding notions for powers of a single ideal. We show that, on these classes of modules, the generalized torsion and completion functors associated with \(\Phi\) are determined by $\mathcal{L}$. We establish characterizations and closure properties of these modules and obtain a Greenlees–May type adjunction and a Matlis–Greenlees–May type characterization. When \(\Phi\) is a system of ideals, we further investigate the radicality of the generalized torsion functor and derive associated torsion theories on suitable Serre subcategories. These results provide a module-theoretic framework for studying generalized torsion and completion with respect to families of ideals.
\end{abstract}

	{\bf Keywords}:  Reduced modules, Coreduced modules, Torsion and Completion, Torsion theory, Greenlees--May Duality and Matlis-Greenlees--May Equivalence

	\vspace*{0.3cm}

{\bf MSC 2020} Mathematics Subject Classification: 13C13, 18A40, 13D30, 13C12, 18E40

\section{Introduction}
\begin{paragraph}\noi
Families of ideals play a fundamental role in commutative algebra and
homological algebra. Perhaps the most familiar example is the descending
family
\[
I \supseteq I^2 \supseteq I^3 \supseteq \cdots
\]
associated with an ideal $I$ of a commutative ring $R$. This family
underlies two classical constructions: the $I$-torsion functor
\[
\Gamma_I(M)
=
\bigcup_{n\geq 1}(0:_M I^n)
\]
and the $I$-adic completion functor
\[
\Lambda_I(M)
=
\varprojlim_n M/I^nM.
\]

Their derived functors give rise to local cohomology and local homology, which occur naturally in duality theory, including Greenlees--May duality
and the Matlis--Greenlees--May equivalence, see for instance \cite{Porta2014}. More generally, many
constructions in local cohomology do not depend essentially on the
powers of a single ideal, but can be developed with respect to suitable inverse families or systems of ideals; see, for example,
\cite{Behrouzian2020, BijanZadeh1980, Brodmann2013, DivaaniAazar2002}.
   
\end{paragraph}

\begin{paragraph}\noi
The purpose of this paper is to show that the theory of reduced and
coreduced modules admits a similar passage from a single ideal to an
inverse family of ideals. This places these module-theoretic notions in
the same general framework as generalized torsion and completion and
reveals connections with torsion theories and classical duality
phenomena.
   
\end{paragraph}

\begin{paragraph}\noi

Recall that an $R$-module $M$ is \emph{reduced} if $ a^2m=0$ implies that $am=0$
for every $a\in R$ and $m\in M$. Reduced modules were introduced by
Lee and Zhou \cite{Lee2004} as a module-theoretic analogue of reduced
rings. In particular, the regular module $R$ is reduced precisely when
$R$ is a reduced ring. The notion has subsequently proved useful in
the study of module-theoretic radicals and related structural questions;
see, for example, \cite{KimuliSsevviiri2023}.
\end{paragraph}

\begin{paragraph}\noi
  
A relative version of this notion was introduced in
\cite{SsevviiriI2024}. Given an ideal $I$ of $R$, an
$R$-module $M$ is called \emph{$I$-reduced} if
\[
I^2m=0
\quad\text{implies that}\quad
Im=0
\]
for every $m\in M$. Dually, $M$ is called \emph{$I$-coreduced} if
\[
I^2M=IM.
\]
These notions are closely connected with the torsion and completion
functors associated with $I$. For instance, $I$-reducedness is
equivalent to the stabilization
\[
(0:_M I)=(0:_M I^n)
\qquad (n\geq 1),
\]
and hence to
\[
\Gamma_I(M)=(0:_M I).
\]
Similarly, $I$-coreducedness expresses stabilization on the completion
side:
\[
IM=I^nM
\qquad (n\geq 1).
\]
  
\end{paragraph}

\begin{paragraph}\noi  
Thus, reducedness and coreducedness may be viewed as conditions under
which the towers defining torsion and completion stabilize at their
first stage. This perspective has led to applications involving
torsion theories, radical functors and Greenlees--May type phenomena;
see \cite{SsevviiriI2024,  SsevviiriII2026, SsevviiriIII2025}. 
 
\end{paragraph}

\begin{paragraph}\noi

The powers $\{I^n\}_{n\geq 1}$, however, constitute only one particular
inverse family of ideals. In local cohomology, it has long been useful
to replace this family by a more general directed family
\[
\Phi=\{I_\alpha\}_{\alpha\in\Lambda},
\qquad
\alpha\geq\beta
\quad\Longrightarrow\quad
I_\alpha\subseteq I_\beta,
\]
and to consider the corresponding generalized torsion functor
\[
\Gamma_\Phi(M)
=
\{m\in M : I_\alpha m=0
\text{ for some }\alpha\}
=
\bigcup_{I_\alpha\in\Phi}(0:_M I_\alpha).
\]
Its natural completion counterpart is
\[
\Lambda_\Phi(M)
=
\varprojlim_{I_\alpha\in\Phi}M/I_\alpha M.
\]
   
\end{paragraph}

\begin{paragraph}\noi    
 
This raises the following natural question:

\begin{quote}
\emph{Can reduced and coreduced modules be formulated directly with
respect to an inverse family of ideals, and if so, how do the resulting
notions interact with generalized torsion, completion and torsion
theory?}
\end{quote}

We answer this question by introducing \emph{$\Phi$-reduced} and
\emph{$\Phi$-coreduced modules}. Suppose that $\Phi$ has a greatest
proper member $\mathcal{L}$ with respect to inclusion. Roughly speaking, an
$R$-module $M$ is $\Phi$-reduced when annihilation by any member of
$\Phi$ already forces annihilation by $\mathcal{L}$, whereas it is
$\Phi$-coreduced when multiplication by every member of $\Phi$ has
the same image as multiplication by $\mathcal{L}$. When
$\Phi=\{I^n\}_{n\geq 1}$, these notions recover precisely the previously studied $I$-reduced
and $I$-coreduced modules. 
\end{paragraph}

\begin{paragraph}\noi
The main point is that these definitions are not merely formal generalizations. They identify classes of modules on which the
generally more complicated functors $\Gamma_\Phi$ and $\Lambda_\Phi$
acquire particularly simple descriptions. For a $\Phi$-reduced module
$M$, we obtain
\[
\Gamma_\Phi(M)
=
(0:_M \mathcal{L})
\cong
\operatorname{Hom}_R(R/\mathcal{L},M),
\]
while for a $\Phi$-coreduced module $M$,
\[
\Lambda_\Phi(M)
\cong
M/\mathcal{L}M
\cong
R/\mathcal{L}\otimes_R M.
\]
   
\end{paragraph}

\begin{paragraph}\noi
Thus, on these classes of modules, generalized torsion and completion
associated with an entire inverse family are controlled by a
\emph{single ideal} $\mathcal{L}$. This reduction from a family of ideals to one
distinguished ideal is one of the central structural features of the
theory developed here. The corresponding characterizations are
established in Propositions \ref{p2} and~\ref{dualidealsyst1}.

\end{paragraph}

\begin{paragraph}\noi

This simplification also exposes a natural duality. On the subcategories
of $\Phi$-reduced and $\Phi$-coreduced modules, respectively, the
functors $\Gamma_\Phi$ and $\Lambda_\Phi$ reduce to the familiar
adjoint pair
\[
R/\mathcal{L}\otimes_R -
\quad\dashv\quad
\operatorname{Hom}_R(R/\mathcal{L},-).
\]
We consequently obtain a Greenlees--May type adjunction
\[
\operatorname{Hom}_R(\Lambda_\Phi(M),N)
\cong
\operatorname{Hom}_R(M,\Gamma_\Phi(N))
\]
for $\Phi$-coreduced $M$ and $\Phi$-reduced $N$. We also identify a
corresponding Matlis--Greenlees--May type phenomenon: for an
$R$-module $M$, being simultaneously $\Phi$-torsion and
$\Phi$-reduced, being simultaneously $\Phi$-complete and
$\Phi$-coreduced and satisfying
$\mathcal{L}M=0$
are equivalent conditions.
   
\end{paragraph}

\begin{paragraph}\noi
A further advantage of working with families of ideals becomes visible
when $\Phi$ satisfies the stronger multiplicative condition of a
\emph{system of ideals}. Such systems occur naturally in generalized
local cohomology and allow the functor $\Gamma_\Phi$ to interact well
with products of ideals. Under this hypothesis, we investigate
radicality and torsion-theoretic properties of $\Gamma_\Phi$. In
particular, on a Serre subcategory of $\Phi$-reduced modules,
$\Gamma_\Phi$ becomes a left exact radical. We obtain torsion theories on suitable
Serre subcategories of $\Phi$-reduced and $\Phi$-coreduced modules.
These results extend the corresponding theory for a fixed ideal and
connect with the classical torsion--torsionfree framework of Jans \cite{Jans1965}.
   
\end{paragraph}

\begin{paragraph}\noi
The framework is also useful from another perspective. Many familiar
results concerning torsion and completion are most commonly formulated
under Noetherian or finite-generation hypotheses. The reduced/coreduced
conditions developed here provide an alternative mechanism under which
analogous simplifications can occur even for modules that are not
finitely generated and over rings that need not be Noetherian. In this
sense, $\Phi$-reducedness and $\Phi$-coreducedness isolate
module-theoretic conditions that compensate, for the purposes considered
here, for some of the finiteness assumptions appearing in the classical
theory.
\end{paragraph}

\begin{paragraph}\noi
The paper is organized as follows. In Section~2 we introduce
$\Phi$-reduced modules, characterize them through the generalized
torsion functor $\Gamma_\Phi$ and establish their basic closure and
comparison properties. Section~3 develops the dual theory of
$\Phi$-coreduced modules and characterizes them in terms of the
generalized completion functor $\Lambda_\Phi$. We also study the
relationship between the reduced and coreduced conditions through
Hom--tensor duality. Section~4 establishes the Greenlees--May type
adjunction and the corresponding Matlis--Greenlees--May type
equivalence. Finally, in Section~5 we specialize to systems of ideals
and investigate radicality and torsion theories, culminating in
torsion-theoretic results on Serre subcategories of $\Phi$-reduced and
$\Phi$-coreduced modules.   
\end{paragraph}

\section{$\Phi$-reduced modules}

 \begin{paragraph}\noi We first recall the notion of an inverse family of ideals.     
 \end{paragraph}

\begin{defn}\rm {\cite[Definition 1.2.10]{Brodmann2013}}   Let  $(\Lambda, \leq)$ be a partially ordered (non-empty) directed set. By an {\it inverse family of ideals} (of $R$) over $\Lambda$, we mean a family $(I_{\alpha})_{\alpha\in \Lambda}$ of ideals of $R$ such that whenever $(\alpha, \beta) \in \Lambda \times \Lambda$ with $\alpha \geq \beta$,
we have $I_{\alpha} \subseteq I_{\beta}$. 
\end{defn}

 \begin{paragraph}\noi
Throughout this paper, unless otherwise stated, $\Phi$ denotes an
inverse family of ideals of $R$ having a greatest proper member $\mathcal{L}$
with respect to inclusion. Thus, $\mathcal{A}\subseteq \mathcal{L}$ for every $\mathcal{A}\in\Phi$. 
\end{paragraph}

\begin{exam}\rm 
 Let $I$ and $J$ be ideals of a ring $R$. The sets $\{I, I^2, I^3, \cdots\}$, \\ $\{I, IJ, I^2J^2, I^3J^3, \cdots\}$,  $\{I, I^2J, I^3J^2, I^4J^3, I^5J^4, \cdots\}$ and $\{I^2J, I^4J^2, I^6J^3, \cdots \}$ are examples of inverse families of ideals of $R$.    
\end{exam} 

\begin{exam}\rm  
Let \(R := k[x,y]\) be the polynomial ring over a field \(k\), and let \(\Lambda = \mathbb{N}^2\), where $\mathbb{N}=\{1, 2, 3, \cdots\}$ with the product order: \((a,b) \le (c,d)\) if and only if \(a\le c\) and \(b\le d\). For each \((a,b)\in\Lambda\),  define $I_{(a,b)} := (x^a y^b)$.
If \((a,b), (c,d)\in\Lambda\), choose \(\delta = (a+c,\; b+d)\). Then \(\delta\ge (a,b)\),  \(\delta\ge (c,d)\), and $I_\delta = (x^{a+c}y^{b+d}) = (x^a y^b)(x^c y^d) = I_{(a,b)} I_{(c,d)}$. If \((a,b)\ge (c,d)\),  then $(x^a y^b) \subseteq (x^c y^d)$, so $I_{(a,b)}\subseteq I_{(c,d)}$. 
\end{exam}

\begin{defn}\rm
 Let $\Phi$ be an inverse family of ideals of a ring $R$ having a greatest proper member $\mathcal{L}$ with respect to inclusion. An $R$-module $M$ is {\it $\Phi$-reduced} if for every $m \in M$ and ideal $\mathcal{A} \in \Phi$, $\mathcal{A}m = 0$ implies that $\mathcal{L}m = 0$.

\end{defn}

\begin{propn}\rm\label{p2}
 Let $M$ be an $R$-module, and let $\Phi$ be an inverse family of ideals of $R$ having a greatest proper member $\mathcal L$ with respect to inclusion. Then the following statements are equivalent:
\begin{enumerate}
    \item $M$ is $\Phi$-reduced;

    \item $    (0:_M\mathcal A)=(0:_M\mathcal L)$   for every $\mathcal A\in\Phi$;

    \item for every $\mathcal A\in\Phi$, the canonical inclusion     $(0:_M\mathcal L)\longrightarrow (0:_M\mathcal A)$
    is an isomorphism;

    \item for every $\mathcal A\in\Phi$, the canonical morphism
    $   \operatorname{Hom}_R(R/\mathcal L,M)  \longrightarrow     \operatorname{Hom}_R(R/\mathcal A,M)$,     
    induced by the canonical epimorphism  $ R/\mathcal A\longrightarrow R/\mathcal L$,   is an isomorphism;

    \item     $\Gamma_\Phi(M)=(0:_M\mathcal L)$; 
    \item $\mathcal L\Gamma_\Phi(M)=0$. 
    \end{enumerate}
Consequently, if these equivalent conditions hold, then there is a
natural isomorphism
$\Gamma_\Phi(M)\cong \operatorname{Hom}_R(R/\mathcal L,M)$.
\end{propn}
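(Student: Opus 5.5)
The plan is to run through the cycle $1\Rightarrow 2\Rightarrow 3\Rightarrow 4\Rightarrow 5\Rightarrow 6\Rightarrow 1$. One basic containment will be used repeatedly: since $\mathcal A\subseteq\mathcal L$ for every $\mathcal A\in\Phi$, we always have $(0:_M\mathcal L)\subseteq(0:_M\mathcal A)$.

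\emph{From 1 to 4.} For $1\Rightarrow 2$, take $m\in(0:_M\mathcal A)$. Then $\mathcal Am=0$, so $\Phi$-reducedness gives $\mathcal Lm=0$. This yields the reverse containment, hence equality. The step $2\Rightarrow 3$ is immediate: a canonical inclusion between equal submodules is the identity, hence an isomorphism. For $3\Rightarrow 4$, I use the standard natural isomorphism $\operatorname{Hom}_R(R/I,M)\cong(0:_M I)$, given by $f\mapsto f(1+I)$. Under it, precomposition with $R/\mathcal A\to R/\mathcal L$ corresponds exactly to the inclusion $(0:_M\mathcal L)\to(0:_M\mathcal A)$. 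So one map is an isomorphism if and only if the other is; this argument also gives $4\Rightarrow 3$ if needed.

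\emph{From 4 to 6.} For $4\Rightarrow 5$, I first pass back through the Hom identification to get $(0:_M\mathcal A)=(0:_M\mathcal L)$ for all $\mathcal A\in\Phi$. Then
\[
\Gamma_\Phi(M)=\bigcup_{\mathcal A\in\Phi}(0:_M\mathcal A)=(0:_M\mathcal L).
\]
This uses that $\Phi$ is nonempty, and that $\mathcal L\in\Phi$ is in any case a member of the union. For $5\Rightarrow 6$, we simply get $\mathcal L\Gamma_\Phi(M)=\mathcal L(0:_M\mathcal L)=0$.

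\emph{Closing the cycle.} For $6\Rightarrow 1$, suppose $\mathcal Am=0$ with $\mathcal A\in\Phi$. Then $m\in(0:_M\mathcal A)\subseteq\Gamma_\Phi(M)$, so $\mathcal Lm\subseteq\mathcal L\Gamma_\Phi(M)=0$.

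\emph{The final isomorphism.} Combining 5 with the natural isomorphism $(0:_M\mathcal L)\cong\operatorname{Hom}_R(R/\mathcal L,M)$ gives $\Gamma_\Phi(M)\cong\operatorname{Hom}_R(R/\mathcal L,M)$. Naturality in $M$, for morphisms between $\Phi$-reduced modules, holds because both identifications commute with maps $M\to N$. Indeed, a homomorphism sends $\mathcal L$-annihilated elements to $\mathcal L$-annihilated elements, and $f\mapsto f(1+\mathcal L)$ is natural.

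The only step needing any care is $3\Leftrightarrow 4$. There I must check that the Hom identification transports the precomposition map to the inclusion, i.e.\ that the square commutes. This holds because $g\circ\pi$ sends $1+\mathcal A$ to $g(1+\mathcal L)$, where $\pi\colon R/\mathcal A\to R/\mathcal L$ is the canonical epimorphism. Everything else is routine element-chasing.
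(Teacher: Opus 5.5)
Your proposal is correct and follows essentially the same route as the paper, which proves $(1)\Rightarrow(2)\Rightarrow(3)\Rightarrow(4)\Rightarrow(2)\Rightarrow(5)\Rightarrow(6)\Rightarrow(1)$ using the identification $\operatorname{Hom}_R(R/I,M)\cong(0:_M I)$; your step $4\Rightarrow 5$ is exactly its $4\Rightarrow 2\Rightarrow 5$. Your explicit check that the square commutes, via $g\circ\pi$ sending $1+\mathcal A$ to $g(1+\mathcal L)$, is a detail the paper only asserts.
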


\begin{prf}
Since $\mathcal L$ is a greatest proper member of $\Phi$, we have
$\mathcal A\subseteq\mathcal L$ for every $\mathcal A\in\Phi$. Consequently,
$(0:_M\mathcal L)\subseteq(0:_M\mathcal A)$ for every $\mathcal A\in\Phi$. We prove the equivalences.

\medskip
\noindent
\textbf{$(1)\Rightarrow(2)$.}
Suppose that $M$ is $\Phi$-reduced. Let $\mathcal A\in\Phi$.
As observed above, $(0:_M\mathcal L)\subseteq(0:_M\mathcal A)$. Conversely, if $m\in(0:_M\mathcal A)$, then $\mathcal Am=0$.  Since $M$ is $\Phi$-reduced, it follows that
$\mathcal Lm=0$. Thus $m\in(0:_M\mathcal L)$, and hence $(0:_M\mathcal A)=(0:_M\mathcal L)$. 
\medskip
\noindent
\textbf{$(2)\Rightarrow(3)$.} For every $\mathcal A\in\Phi$, the inclusion
$(0:_M\mathcal L)\hookrightarrow(0:_M\mathcal A)$ is canonical. By (2), its source and target are equal. Hence this canonical inclusion is an isomorphism.

\medskip
\noindent
\textbf{$(3)\Rightarrow(4)$.}
For every ideal $\mathcal I$ of $R$, there is a natural isomorphism
\[
\operatorname{Hom}_R(R/\mathcal I,M)
\cong
(0:_M\mathcal I),
\]
given by
\[
f\longmapsto f(1+\mathcal I).
\]
Since $\mathcal A\subseteq\mathcal L$, the canonical epimorphism
$q_{\mathcal A}:R/\mathcal A\longrightarrow R/\mathcal L$ 
induces, by precomposition, a canonical morphism
\[
\operatorname{Hom}_R(R/\mathcal L,M)
\longrightarrow
\operatorname{Hom}_R(R/\mathcal A,M).
\]
Under the natural identifications $\operatorname{Hom}_R(R/\mathcal L,M) \cong(0:_M\mathcal L)$  and $\operatorname{Hom}_R(R/\mathcal A,M) \cong(0:_M\mathcal A)$, this morphism corresponds precisely to the canonical inclusion
\[
(0:_M\mathcal L)\hookrightarrow(0:_M\mathcal A).
\]
Therefore, by (3), the induced Hom morphism is an isomorphism.

\medskip
\noindent
\textbf{$(4)\Rightarrow(2)$.}
By the same natural identifications, the canonical morphism
\[
\operatorname{Hom}_R(R/\mathcal L,M)
\longrightarrow
\operatorname{Hom}_R(R/\mathcal A,M)
\]
corresponds to the canonical inclusion
\[
(0:_M\mathcal L)\hookrightarrow(0:_M\mathcal A).
\]
If the former is an isomorphism, then so is the latter. Hence
\[
(0:_M\mathcal A)=(0:_M\mathcal L)
\]
for every $\mathcal A\in\Phi$. Thus (2) holds.

\medskip
\noindent
\textbf{$(2)\Rightarrow(5)$.}
By definition, $\Gamma_\Phi(M) = \bigcup_{\mathcal A\in\Phi}(0:_M\mathcal A)$. By (2),
$(0:_M\mathcal A)=(0:_M\mathcal L)$ for every $\mathcal A\in\Phi$. Therefore
$\Gamma_\Phi(M) = (0:_M\mathcal L)$. 

\medskip
\noindent
\textbf{$(5)\Rightarrow(6)$.}
If $\Gamma_\Phi(M)=(0:_M\mathcal L)$, then $\mathcal L\Gamma_\Phi(M)
= \mathcal L(0:_M\mathcal L) = 0$. 

\medskip
\noindent
\textbf{$(6)\Rightarrow(1)$.}
Suppose that
$\mathcal L\Gamma_\Phi(M)=0$. Let $\mathcal A\in\Phi$ and let $m\in M$ satisfy
$\mathcal Am=0$. Then $m\in(0:_M\mathcal A)\subseteq\Gamma_\Phi(M)$. Hence, by (6), $\mathcal Lm=0$. Therefore $\mathcal Am=0$ implies that $\mathcal Lm=0$ for every $\mathcal A\in\Phi$ and every $m\in M$. Thus $M$ is $\Phi$-reduced. Finally, under these  six equivalent conditions,
$\Gamma_\Phi(M)=(0:_M\mathcal L)$ and the standard natural isomorphism
$(0:_M\mathcal L) \cong \operatorname{Hom}_R(R/\mathcal L,M)$ gives
$\Gamma_\Phi(M)\cong \operatorname{Hom}_R(R/\mathcal L,M)$. 
\end{prf}

\begin{propn}\rm
The class of $\Phi$-reduced $R$-modules is closed under submodules,
arbitrary direct products, arbitrary direct sums and inverse limits.
\end{propn}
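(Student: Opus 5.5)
The plan is to verify the defining implication elementwise in each case. The one structural observation is that $\Phi$-reducedness is a condition on individual elements: $\mathcal{A}m=0$ forces $\mathcal{L}m=0$, where annihilation is tested by the ideal acting on $m$. Each construction embeds into, or is computed coordinatewise in, modules already known to be $\Phi$-reduced, so this condition transfers.

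\emph{Submodules.} Let $N\le M$ with $M$ $\Phi$-reduced. Take $n\in N$ and $\mathcal{A}\in\Phi$ with $\mathcal{A}n=0$. Regard $n$ as an element of $M$; then $\mathcal{L}n=0$ in $M$, hence in $N$. Equivalently, by Proposition~\ref{p2}(2), $(0:_N\mathcal{A})=(0:_M\mathcal{A})\cap N=(0:_M\mathcal{L})\cap N=(0:_N\mathcal{L})$.

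\emph{Direct products.} Let $\{M_i\}_{i\in I}$ be $\Phi$-reduced and let $m=(m_i)\in\prod_i M_i$ satisfy $\mathcal{A}m=0$. Since the module action is coordinatewise, $\mathcal{A}m_i=0$ for every $i$. Hence $\mathcal{L}m_i=0$ for every $i$, and so $\mathcal{L}m=0$. In annihilator form, $(0:_{\prod M_i}\mathcal{A})=\prod_i(0:_{M_i}\mathcal{A})=\prod_i(0:_{M_i}\mathcal{L})=(0:_{\prod M_i}\mathcal{L})$. Note that no finiteness of $\mathcal{A}$ is needed, because the condition $\mathcal{A}m=0$ is checked one element $a\in\mathcal{A}$ at a time.

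\emph{Direct sums and inverse limits.} For direct sums, $\bigoplus_i M_i$ is a submodule of $\prod_i M_i$, so the result follows from the two previous cases. For an inverse system $(M_j,\varphi_{jk})$ of $\Phi$-reduced modules, I will use the standard realization of $\varprojlim M_j$ as the submodule of $\prod_j M_j$ consisting of compatible families. It is then a submodule of a $\Phi$-reduced module, and the first case applies.

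I expect no real obstacle here. The only point needing care is to use the concrete models of the direct sum and the inverse limit as submodules of the product. This avoids any argument about how annihilators interact with colimit-type or limit-type universal properties.
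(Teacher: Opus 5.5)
Your proposal is correct and takes essentially the same route as the paper. Submodules follow directly from the definition, products are handled by a coordinatewise check, and direct sums and inverse limits are treated as submodules of the product; your annihilator reformulations via Proposition~\ref{p2}(2) are just a restatement of the same elementwise argument.
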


\begin{prf}
Closure under submodules follows immediately from the definition. Let $\{M_{\alpha}\}_{\alpha\in \Lambda}$ be a family of $\Phi$-reduced modules and let
$M=\prod_{\alpha\in \Lambda}M_{\alpha}$. Suppose that $\mathcal{A}\in\Phi$ and $m=(m_{\alpha})_{\alpha\in \Lambda}\in M$ satisfy $\mathcal{A}m=0$. Then $\mathcal{A}m_{\alpha}=0$ for every $\alpha$. Since each $M_{\alpha}$ is $\Phi$-reduced,
$\mathcal{L}m_{\alpha}=0$ for every $\alpha$, and hence $\mathcal{L}m=0$. Thus $M$ is
$\Phi$-reduced. Since $\bigoplus_{\alpha\in\Lambda}M_{\alpha}$ is a submodule of
$\prod_{\alpha\in \Lambda}M_{\alpha}$, arbitrary direct sums are also
$\Phi$-reduced. Finally, an inverse limit of $R$-modules is naturally a submodule of
the corresponding direct product. Therefore inverse limits of
$\Phi$-reduced modules are $\Phi$-reduced.
\end{prf}

\begin{paragraph}\noindent
If $I$ is an ideal of $R$ and $\Phi=\{I^n\}_{n\in \Z^+}$, then $\Phi$-reduced $R$-modules are just the $I$-reduced $R$-modules and Proposition \ref{p2} becomes just Proposition \ref{p3}.
\end{paragraph}

\begin{propn}\rm\cite[Proposition 2.2]{SsevviiriI2024}\label{p3}
  For any $R$-module $M$ and an ideal $I$ of $R$, the following statements are equivalent:
   \begin{enumerate}
       \item $M$ is $I$-reduced,
       \item $(0:_M I)=(0:_M I^2)$,
\item $\text{Hom}_{R}(R/I,M)\cong \text{Hom}_R(R/I^2, M )$,
       \item $\varGamma_I(M)\cong \text{Hom}_R(R/I,M)$,
       \item $I\varGamma_I(M)=0$.
   \end{enumerate}
\end{propn}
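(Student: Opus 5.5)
I will prove Proposition \ref{p3} as the special case $\Phi=\{I^n\}_{n\geq 1}$ of Proposition \ref{p2}, together with one extra step needed because (2) and (3) only involve $I^2$ rather than all powers. If $I=R$ everything is trivial, since then $I^n=R$ for all $n$ and all conditions say the same thing. So assume $I$ is proper. Then $\Phi$ is an inverse family over $\mathbb{Z}^+$ with greatest proper member $\mathcal L=I$. Moreover, $M$ is $\Phi$-reduced exactly when $I^nm=0$ implies $Im=0$ for all $n$, and $\Gamma_\Phi=\Gamma_I$.

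The key step is to show that $I$-reducedness (the case $n=2$ only) already gives $\Phi$-reducedness, i.e. $(0:_MI^n)=(0:_MI)$ for all $n$. I will do this by induction on $n\geq 2$. Suppose $I^{n+1}m=0$ with $n\geq 2$, and take $x\in I^{n-1}$. Then
\[
I^2(xm)\subseteq I^{n+1}m=0 ,
\]
so $I(xm)=0$ by (1). Since $x\in I^{n-1}$ was arbitrary, this gives $I^nm=0$, and the induction hypothesis yields $Im=0$. This stabilization argument is the main point of the proof; everything else is routine.

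With this in hand, the cycle of implications runs as follows.
\begin{itemize}
\item[(1)$\Leftrightarrow$(2):] This is immediate from the definition. For the converse direction, $I^2m=0$ puts $m$ in $(0:_MI^2)=(0:_MI)$.
\item[(2)$\Rightarrow$(3):] The isomorphisms $\mathrm{Hom}_R(R/J,M)\cong(0:_MJ)$ identify the two Hom modules with equal submodules.
\item[(1)$\Rightarrow$(4):] By the stabilization step, $M$ is $\Phi$-reduced. Proposition \ref{p2} then gives $\Gamma_I(M)=(0:_MI)\cong\mathrm{Hom}_R(R/I,M)$.
\item[(4)$\Rightarrow$(5) and (5)$\Rightarrow$(1):] Both follow from the definitions: if $I^2m=0$ then $m\in\Gamma_I(M)$, so $Im=0$.
\end{itemize}

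The delicate point is how to read (3) and (4), since they are stated as abstract isomorphisms. I will interpret them as saying that the canonical maps are isomorphisms. For (3) this is the map induced by $R/I^2\to R/I$; for (4) it is the inclusion $(0:_MI)\hookrightarrow\Gamma_I(M)$, which is the identification used in Proposition \ref{p2}. Under this reading, (3)$\Rightarrow$(2) follows by (4)$\Rightarrow$(2) of Proposition \ref{p2} applied with $\mathcal A=I^2$. Likewise, (4) means $\Gamma_I(M)=(0:_MI)$, which gives (5) and closes the cycle.

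If (3) and (4) were meant as arbitrary abstract isomorphisms, the equivalences involving them could fail. For instance, $(0:_MI)$ and $(0:_MI^2)$ might be non-equal yet abstractly isomorphic. So I will state the canonical-map reading explicitly at the start of the proof.
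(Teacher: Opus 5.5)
Your route is the one the paper indicates. The paper gives no separate proof, only the remark that Proposition~\ref{p2} with $\Phi=\{I^n\}_{n\geq 1}$ becomes this statement. Your induction showing that $(0:_MI^2)=(0:_MI)$ forces $(0:_MI^n)=(0:_MI)$ for all $n$ is correct, and it supplies exactly the identification of $I$-reduced with $\{I^n\}$-reduced modules that the paper's remark takes for granted. Your verifications of (1)$\Leftrightarrow$(2), (2)$\Rightarrow$(3), (1)$\Rightarrow$(4) and (5)$\Rightarrow$(1) are fine.

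The error is in your last two paragraphs. You claim that if $\cong$ is read as an arbitrary isomorphism, (3) and (4) might fail to be equivalent to the other conditions, for instance because $(0:_MI)$ and $(0:_MI^2)$ could be unequal yet isomorphic. This is false, and because of it you prove a reinterpreted statement rather than the one written. The key point is that an $R$-module isomorphism preserves annihilators, and $I$ annihilates $\operatorname{Hom}_R(R/I,M)\cong(0:_MI)$.

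\begin{itemize}
\item For (3): if (3) holds via any $R$-isomorphism, composing with the natural identifications gives an $R$-isomorphism $\varphi:(0:_MI^2)\to(0:_MI)$. For $m\in(0:_MI^2)$ and $a\in I$ we get $\varphi(am)=a\varphi(m)=0$, hence $am=0$. So $(0:_MI^2)=(0:_MI)$, which is (2).
\item For (4): any isomorphism $\Gamma_I(M)\cong\operatorname{Hom}_R(R/I,M)$ yields $I\Gamma_I(M)=0$, which is (5).
\end{itemize}

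Canonical isomorphisms are in particular isomorphisms, so your implications (2)$\Rightarrow$(3) and (1)$\Rightarrow$(4) already give the abstract versions. With this one observation your argument proves the proposition as literally stated, and the ``delicate point'' disappears. The claimed possible failure should be deleted rather than built into the proof.
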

 
\begin{propn}\rm\label{2-idealsyst}
Let $\Phi_1$ and $\Phi_2$ be inverse families of ideals of $R$
having the same greatest proper member $\mathcal L$.

\begin{enumerate}
    \item Suppose that for every $\mathcal{B}\in\Phi_2$ there exists
    $\mathcal{A}\in\Phi_1$ such that $\mathcal{A}\subseteq \mathcal{B}$. If $M$ is     $\Phi_1$-reduced, then $M$ is $\Phi_2$-reduced. Moreover,
   $\Gamma_{\Phi_1}(M)=\Gamma_{\Phi_2}(M)=(0:_M \mathcal{L})$. 

    \item Suppose that for every $\mathcal{A}\in\Phi_1$ there exists
    $\mathcal{B}\in\Phi_2$ such that $\mathcal{B}\subseteq \mathcal{A}$. If $M$ is
    $\Phi_2$-reduced, then $M$ is $\Phi_1$-reduced. Moreover,
   $\Gamma_{\Phi_1}(M)=\Gamma_{\Phi_2}(M)=(0:_M \mathcal{L})$.
\end{enumerate}
\end{propn}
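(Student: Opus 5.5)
Part (1) can be proved directly from the definition, with Proposition \ref{p2} used for the equality of torsion submodules. Let $M$ be $\Phi_1$-reduced. Take $\mathcal B\in\Phi_2$ and $m\in M$ with $\mathcal Bm=0$. By hypothesis there is $\mathcal A\in\Phi_1$ with $\mathcal A\subseteq\mathcal B$. Then $\mathcal Am\subseteq\mathcal Bm=0$, and $\Phi_1$-reducedness gives $\mathcal Lm=0$. Since $\mathcal L$ is also the greatest proper member of $\Phi_2$, this is exactly the $\Phi_2$-reducedness condition for the pair $(\mathcal B,m)$. Hence $M$ is $\Phi_2$-reduced.

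For the equalities, apply Proposition \ref{p2}, condition (5), twice. Applied to $\Phi_1$ it gives $\Gamma_{\Phi_1}(M)=(0:_M\mathcal L)$. Applied to $\Phi_2$, which we may now do since $M$ is $\Phi_2$-reduced, it gives $\Gamma_{\Phi_2}(M)=(0:_M\mathcal L)$. This yields $\Gamma_{\Phi_1}(M)=\Gamma_{\Phi_2}(M)=(0:_M\mathcal L)$.

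Part (2) is the same statement with the roles of $\Phi_1$ and $\Phi_2$ interchanged, so I would simply say it follows from (1) by symmetry. If one wants it explicit: given $\mathcal A\in\Phi_1$ with $\mathcal Am=0$, choose $\mathcal B\in\Phi_2$ with $\mathcal B\subseteq\mathcal A$. Then $\mathcal Bm=0$, so $\mathcal Lm=0$ by $\Phi_2$-reducedness. The torsion equalities then follow from Proposition \ref{p2} exactly as before.

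I expect no real obstacle. The only points needing care are that the cofinality hypothesis runs in the right direction, since a smaller ideal annihilating $m$ is implied by a larger one doing so, and that the common greatest member $\mathcal L$ is what lets the two definitions of reducedness refer to the same target ideal.
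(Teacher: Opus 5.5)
Your proof is correct and is essentially the paper's argument: both use the observation that $\mathcal Bm=0$ forces $\mathcal Am=0$ for the chosen $\mathcal A\subseteq\mathcal B$, together with Proposition~\ref{p2}, and both handle (2) by symmetry. The only difference is packaging: the paper proves the chain $(0:_M\mathcal L)\subseteq\Gamma_{\Phi_2}(M)\subseteq\Gamma_{\Phi_1}(M)=(0:_M\mathcal L)$ and then applies $(5)\Rightarrow(1)$ of Proposition~\ref{p2}, whereas you verify $\Phi_2$-reducedness directly from the definition and then apply $(1)\Rightarrow(5)$ to both families.
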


\begin{prf}
We prove (1); the proof of (2) is symmetric. Let $m\in \Gamma_{\Phi_2}(M)$. Then there exists $\mathcal{B}\in\Phi_2$ such that $\mathcal{B}m=0$. By hypothesis, there exists $\mathcal{A}\in\Phi_1$
such that $\mathcal{A}\subseteq \mathcal{B}$. Hence $\mathcal{A}m=0$,
and therefore $    m\in\Gamma_{\Phi_1}(M)$. Thus
$  \Gamma_{\Phi_2}(M)\subseteq\Gamma_{\Phi_1}(M)$. 
Since $M$ is $\Phi_1$-reduced, Proposition~\ref{p2} gives
$\Gamma_{\Phi_1}(M)=(0:_M \mathcal{L})$. On the other hand, since $\mathcal{L}\in\Phi_2$, $ (0:_M \mathcal{L})\subseteq\Gamma_{\Phi_2}(M)$. 
Consequently, $(0:_M \mathcal{L})
    \subseteq\Gamma_{\Phi_2}(M)
    \subseteq\Gamma_{\Phi_1}(M)
    =(0:_M \mathcal{L})$. Hence $\Gamma_{\Phi_2}(M)=(0:_M \mathcal{L})$.
By Proposition~\ref{p2}, $M$ is $\Phi_2$-reduced. Moreover,
$\Gamma_{\Phi_1}(M)=\Gamma_{\Phi_2}(M)=(0:_M \mathcal{L})$. 
\end{prf}

\begin{propn}\rm \label{phi vs I} Let $I$ be an ideal of a ring $R$. Suppose that $\Phi=\{ J_1, J_2, J_3,\cdots\}$ is an inverse family of ideals of a ring $R$  such that $ J_1 \supseteq J_2 \supseteq J_3\supseteq \cdots$ and $I=J_1$. If  $ J_n\subseteq I^n ~(\text{resp}.~ I^n \subseteq J_n)$ for all positive integers $n \geq 2$, then whenever $M$ is  $\Phi$-reduced (resp. $I$-reduced), it follows that $M$ is also $I$-reduced (resp. $\Phi$-reduced). In both cases, $\Gamma_\Phi(M)=\Gamma_I(M)$.
    \end{propn}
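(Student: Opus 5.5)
The plan is to deduce both implications from Proposition~\ref{2-idealsyst}. We compare $\Phi$ with the inverse family $\Psi=\{I^n\}_{n\in\Z^+}$ indexed by the positive integers. We use the identification noted right after Proposition~\ref{p2}: $\Psi$-reduced modules are exactly the $I$-reduced modules, and $\Gamma_\Psi(M)=\bigcup_n(0:_M I^n)=\Gamma_I(M)$.

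First I check that $\Phi$ and $\Psi$ have the same greatest proper member, as Proposition~\ref{2-idealsyst} requires. Since $J_1\supseteq J_2\supseteq\cdots$ and $J_1=I$, every member of $\Phi$ lies in $I$, so $I$ is the greatest member of $\Phi$. Likewise $I^n\subseteq I$ for all $n$, so $I$ is the greatest member of $\Psi$. Thus $\mathcal L=I$ for both families, provided $I$ is proper. If $I=R$, then $J_n=R$ and $I^n=R$ for all $n$, so both reducedness notions hold trivially and $\Gamma_\Phi(M)=0=\Gamma_I(M)$. This degenerate case can be handled separately, or excluded by the standing convention.

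For the first case, assume $J_n\subseteq I^n$ for all $n\ge 2$. Then every member $I^n$ of $\Psi$ contains a member of $\Phi$: for $n\ge2$ we take $J_n$, and for $n=1$ we take $J_1=I$. This is exactly the hypothesis of Proposition~\ref{2-idealsyst}(1) with $\Phi_1=\Phi$ and $\Phi_2=\Psi$. So a $\Phi$-reduced module $M$ is $\Psi$-reduced, that is, $I$-reduced, and
\[
\Gamma_\Phi(M)=\Gamma_\Psi(M)=\Gamma_I(M)=(0:_M I).
\]

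For the second case, assume $I^n\subseteq J_n$ for all $n\ge2$. Then every member $J_n$ of $\Phi$ contains a member of $\Psi$, namely $I^n$, with $I^1=J_1$ covering $n=1$. Proposition~\ref{2-idealsyst}(1) with $\Phi_1=\Psi$ and $\Phi_2=\Phi$ (or equivalently part (2) with the roles as stated) shows that an $I$-reduced module is $\Phi$-reduced, with the same equality of torsion functors.

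The main point needing care is the identification of $\Psi$-reducedness with $I$-reducedness. The definition of $I$-reduced only involves $I^2$, whereas $\Psi$-reduced involves all powers $I^n$. One direction is immediate. For the converse, assuming $(0:_M I)=(0:_M I^2)$, I would show by induction that $(0:_M I^n)=(0:_M I)$. Suppose $I^{n+1}m=0$. Then for every $a\in I^{n-1}$ we have $I^2(am)=0$, hence $I(am)=0$. So $I^n m=0$, and by the induction hypothesis $Im=0$. This is precisely the content of Proposition~\ref{p3}, which I would cite.
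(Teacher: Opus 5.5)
Your proposal is correct and follows the paper's approach: the paper's entire proof is the remark that this is a special case of Proposition~\ref{2-idealsyst}, applied to $\Phi$ and $\{I^n\}_{n\ge 1}$. You go further by checking that the two families share the greatest member $I$, and by proving (via your induction) that $\{I^n\}$-reducedness coincides with $I$-reducedness, which the paper only asserts. (Minor slip in your degenerate aside: $I=R$ forces $J_n=R$ only under $I^n\subseteq J_n$, not under $J_n\subseteq I^n$; this case is excluded anyway, since $\mathcal L=J_1$ is assumed proper.)
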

\begin{prf}\rm
    Proposition \ref{phi vs I} is a special case of Proposition \ref{2-idealsyst}.
\end{prf}

\begin{exam}\rm
Let $R$ be $\mathbb{Z}$, the ring of integers and  $I$ be the ideal $2\mathbb{Z}$ of $\mathbb{Z}$. If  $\Phi=\{2^n\mathbb{Z}\}_{n\geq 1}$ and $M$ is the $\mathbb{Z}$-module $\mathbb{Z}/6\mathbb{Z}$ then, $M$ is both $I$-reduced and $\Phi$-reduced. Note that $\Gamma_\Phi(M)=(0:_M 2\mathbb{Z})=\{0,3\}=\Gamma_I(M)$.
\end{exam}

\begin{exam}\rm\label{I-red}
  Let $\Phi:=\{J_1=(2), ~J_n=(2).(3)^{n-1}, n\geq 2\}$ be an inverse family of ideals of a ring $R:=\mathbb{Z}$ and $M$ be a  $\mathbb{Z}$-module $\mathbb{Z}/6\mathbb{Z}$. For $I:=(2)$, $M$ is $I$-reduced since $(0:_M I)=(0:_M I^2)$ but $M$ is not $\Phi$-reduced.   
\end{exam}
\begin{exam}\rm\label{Phi-red}
 Let $\Phi:=\{J_1=(x), ~J_n=(x(x-1)^{n-1}), n\geq 2\}$ be an inverse family of ideals of a ring $R:=\mathbb{Z}[x]$ and $M$ be  $\mathbb{Z}[x]$-module $\mathbb{Z}[x]/(x)^2$. For $I:=(x)$ and $(0:_M \mathcal{A})=(0:_M \mathcal{L})$ for all $\mathcal{A}\in \Phi$. So, $M$ is $\Phi$-reduced but not $I$-reduced.   
\end{exam}

\section{$\Phi$-coreduced modules}
\begin{paragraph}\noi
    In this section, we dualize the work done in Section $2$. 	The $I$-adic completion functor $\Lambda_I$ associates to each $R$-module $M$, an $R$-module

	\begin{equation}
	 \Lambda_I(M):=\underset{k}{\underleftarrow{\lim}} (M/I^k M)\cong \underset{k}{\underleftarrow{\lim}}(R/I^k\otimes_R M).
	\end{equation}

	The $I$-adic completion functor was crucial in characterizing $I$-coreduced modules in \cite{SsevviiriI2024} and \cite{SsevviiriII2026}. We generalize it using an inverse family $\Phi$. For an $R$-module $M$ and an inverse family $\Phi$, we define

	\begin{equation}
	\Lambda_{\Phi}(M):=\underset{\mathcal{A} \in {\Phi}}{\underleftarrow{\lim}}(M/\mathcal{A}M)\cong \underset{\mathcal{A} \in {\Phi}}{\underleftarrow{\lim}}(R/\mathcal{A}\otimes_R M).
	\end{equation}

\end{paragraph}
\begin{defn}\rm
 Let $\Phi$ be an inverse family of ideals of a ring $R$, having a greatest proper member $\mathcal{L}$ with respect to inclusion. An $R$-module $M$ is said to be $\Phi$-\textit{coreduced} if for every $\mathcal{A} \in \Phi$, $\mathcal{A}M = \mathcal{L}M$.
    
\end{defn}
\begin{exam}
    
\rm For a $\mathbb{Z}$-module $M:=\mathbb{Z}/2\mathbb{Z}$ and an inverse family of ideals of $\mathbb{Z}$; $\Phi:=\{(2), (2^2.3),(2^3.3^2),(2^4.3^3),\cdots\}$  with $\mathcal{L}:=(2)$, $M$ is $\Phi$-coreduced since for all $\mathcal{A}\in \Phi$, $\mathcal{A}M=\mathcal{L}M$.
    \end{exam}
\begin{exam}\rm\label{I-core}
    Let $\Phi:=\{J_1=(2), ~J_n=(2)^{n-1}(3), n\geq 2\}$ be an inverse family of ideals of a ring $R:=\mathbb{Z}$ and a $\mathbb{Z}$-module $M:=\mathbb{Z}/3\mathbb{Z}$. For $I:=(2)$, $M$ is $I$-coreduced since $I^2M=IM$ but $M$ is not $\Phi$-coreduced.
\end{exam}

\begin{exam}\rm \label{phi-core}
    Let $\Phi:=\{J_1=(x), ~J_n=(x(1+x)^{n-1})\}, n\geq 2\}$ be an inverse family of ideals of a ring $R:=\mathbb{Z}[x]$ and a $\mathbb{Z}[x]$-module $M:=\mathbb{Z}[x]/(x)^2$. For $I:=(x)$, $M$ is $\Phi$-coreduced since $\mathcal{A}M=\mathcal{L}M$ for all $\mathcal{A} \in \Phi$ but $M$ is not $I$-coreduced.
\end{exam}

\begin{propn}\label{dualidealsyst1}\rm
 
Let \(M\) be an \(R\)-module. Let \(\Phi\) be an inverse family of ideals of \(R\), and let \(\mathcal L\) be a greatest proper member of \(\Phi\) with respect to inclusion. Then the following statements are equivalent:
\begin{enumerate}
    \item  \(M\) is \(\Phi\)-coreduced;

\item for every \(\mathcal A\in\Phi\), the canonical epimorphism

   $$
   M/\mathcal AM\longrightarrow M/\mathcal LM
   $$

   is an isomorphism;

\item for every \(\mathcal A\in\Phi\), the canonical morphism

   $$
   R/\mathcal A\otimes_R M
   \longrightarrow
   R/\mathcal L\otimes_R M
   $$

   induced by the canonical epimorphism \(R/\mathcal A\to R/\mathcal L\) is an isomorphism.
   \end{enumerate}

If these equivalent conditions hold, then there are natural isomorphisms

$$
\Lambda_\Phi(M)
\cong M/\mathcal LM
\cong R/\mathcal L\otimes_R M.
$$

Consequently, $\mathcal L\Lambda_\Phi(M)=0$.    
\end{propn}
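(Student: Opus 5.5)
The plan mirrors the proof of Proposition~\ref{p2}, with quotients and tensor products in place of annihilators and Hom. Since $\mathcal L$ is the greatest proper member of $\Phi$, we have $\mathcal A\subseteq\mathcal L$, and hence $\mathcal AM\subseteq\mathcal LM$, for every $\mathcal A\in\Phi$. So the canonical map $M/\mathcal AM\to M/\mathcal LM$, $m+\mathcal AM\mapsto m+\mathcal LM$, is always a well-defined epimorphism. Its kernel is $\mathcal LM/\mathcal AM$.

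\textbf{$(1)\Leftrightarrow(2)$.} The epimorphism $M/\mathcal AM\to M/\mathcal LM$ is an isomorphism exactly when its kernel $\mathcal LM/\mathcal AM$ vanishes. This happens exactly when $\mathcal AM=\mathcal LM$.

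\textbf{$(2)\Leftrightarrow(3)$.} For any ideal $\mathcal I$ we use the natural isomorphism $R/\mathcal I\otimes_R M\cong M/\mathcal IM$, given by $(r+\mathcal I)\otimes m\mapsto rm+\mathcal IM$. The map induced by $R/\mathcal A\to R/\mathcal L$ sends $(1+\mathcal A)\otimes m$ to $(1+\mathcal L)\otimes m$. Under these identifications it therefore corresponds to the canonical map $M/\mathcal AM\to M/\mathcal LM$. One of these maps is an isomorphism if and only if the other is.

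\textbf{The isomorphism $\Lambda_\Phi(M)\cong M/\mathcal LM$.} This is the step needing the most care. Assume the equivalent conditions hold. Then every module in the inverse system $(M/\mathcal AM)_{\mathcal A\in\Phi}$ equals $M/\mathcal LM$. Each transition map, for $\mathcal A\subseteq\mathcal B$ in $\Phi$, is $m+\mathcal AM\mapsto m+\mathcal BM$. Since $\mathcal AM=\mathcal BM=\mathcal LM$, each transition map is the identity of $M/\mathcal LM$.

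We also need the index set to be directed, which holds by hypothesis. The limit of a system over a directed (hence nonempty and connected) index set in which every object is $N$ and every transition map is $\mathrm{id}_N$ is $N$ itself. To see this, take a compatible family $(x_\alpha)$. For any indices $\alpha,\beta$ choose $\gamma\geq\alpha,\beta$; compatibility gives $x_\alpha=x_\gamma=x_\beta$. So compatible families are exactly the constant ones, and the projection to any single component is an isomorphism $\Lambda_\Phi(M)\to M/\mathcal LM$.

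Alternatively, $\mathcal L\in\Phi$ corresponds to some index $\lambda$ with $I_\lambda=\mathcal L$. The projection $\Lambda_\Phi(M)\to M/\mathcal LM$ at $\lambda$ is then an isomorphism by the same argument. Composing with $M/\mathcal LM\cong R/\mathcal L\otimes_R M$ gives the second isomorphism. Naturality in $M$ is clear, since all maps involved are canonical.

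\textbf{$\mathcal L\Lambda_\Phi(M)=0$.} Transporting along the $R$-linear isomorphism, it suffices to show $\mathcal L\cdot(M/\mathcal LM)=0$, which is immediate.
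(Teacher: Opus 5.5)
Your proposal is correct and follows essentially the same route as the paper: the kernel $\mathcal LM/\mathcal AM$ of the canonical epimorphism gives $(1)\Leftrightarrow(2)$, and the identification $R/\mathcal I\otimes_R M\cong M/\mathcal IM$ gives $(2)\Leftrightarrow(3)$. The completion is then computed by observing that the inverse system is constant with identity transition maps, and your explicit use of directedness to show that compatible families are constant, so that each limit projection is an isomorphism, fills in a step the paper only asserts.
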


\begin{prf} 
Since \(\mathcal L\) is a greatest member of \(\Phi\), for every \(\mathcal A\in\Phi\) we have $\mathcal A\subseteq\mathcal L$. Consequently, $\mathcal AM\subseteq\mathcal LM$, and hence there is a canonical epimorphism

$$
q_{\mathcal A}:M/\mathcal AM\longrightarrow M/\mathcal LM,
\qquad
m+\mathcal AM\longmapsto m+\mathcal LM.
$$

We prove the equivalences.

\((1)\Rightarrow(2)\). Suppose that \(M\) is \(\Phi\)-coreduced. By definition,
$\mathcal AM=\mathcal LM$ 
for every \(\mathcal A\in\Phi\). Therefore
$M/\mathcal AM=M/\mathcal LM$, and the canonical map \(q_{\mathcal A}\) is an isomorphism.

\((2)\Rightarrow(1)\). Suppose that for every \(\mathcal A\in\Phi\), the canonical epimorphism $q_{\mathcal A}:M/\mathcal AM\longrightarrow M/\mathcal LM$ is an isomorphism. Its kernel is $
\ker(q_{\mathcal A})= \mathcal LM/\mathcal AM$, 
because \(\mathcal AM\subseteq\mathcal LM\). Since \(q_{\mathcal A}\) is an isomorphism, its kernel is zero. Hence $\mathcal LM/\mathcal AM=0$, and therefore $\mathcal AM=\mathcal LM$. This holds for every \(\mathcal A\in\Phi\), so \(M\) is \(\Phi\)-coreduced. Thus \((1)\) and \((2)\) are equivalent. Next, for every ideal \(I\) of \(R\), there is a natural isomorphism $R/I\otimes_R M\cong M/IM$, given by $
(r+I)\otimes m\longmapsto rm+IM$.  Under these natural identifications, the canonical morphism 
$$
R/\mathcal A\otimes_RM
\longrightarrow
R/\mathcal L\otimes_RM
$$

corresponds exactly to the canonical epimorphism

$$
M/\mathcal AM\longrightarrow M/\mathcal LM.
$$

Therefore \((2)\) and \((3)\) are equivalent. It remains to determine the completion. Suppose that the equivalent conditions above hold. Since $\mathcal AM=\mathcal LM$ for every \(\mathcal A\in\Phi\), every term in the inverse system defining \(\Lambda_\Phi(M)\) is canonically equal to $M/\mathcal LM$. 
Moreover, the transition maps become identity maps under these identifications. Thus the inverse system is canonically constant. Consequently,

$$
\Lambda_\Phi(M)
=
\varprojlim_{\mathcal A\in\Phi}M/\mathcal AM
\cong M/\mathcal LM.
$$

Using the standard natural isomorphism $M/\mathcal LM\cong R/\mathcal L\otimes_RM$, we obtain $\Lambda_\Phi(M) \cong M/\mathcal LM
\cong R/\mathcal L\otimes_RM$. Finally, \(\mathcal L\) annihilates \(M/\mathcal LM\). Hence, through the above natural isomorphism, $
\mathcal L\Lambda_\Phi(M)=0$. This completes the proof.  
\end{prf}

\begin{paragraph}\noi
If $I$ is an ideal of $R$ and $\Phi=\{I^n\}_{n\in\mathbb{Z}_+}$, then $\Phi$-coreduced
$R$-modules are precisely the $I$-coreduced modules, and Proposition \ref{dualidealsyst1} reduces
to the classical $I$-coreduced case in Proposition \ref{Icorem}.
\end{paragraph}
\begin{propn}\rm \cite[Proposition 2.3]{SsevviiriI2024}\label{Icorem}
    For any $R$-module $M$ and an ideal $I$ of a ring $R$, the following statements are equivalent:
    \begin{enumerate}
        \item $M$ is $I$-coreduced,
\item $IM = I^2M$,
\item $R/I\otimes_R M \cong R/I^2 \otimes_R M$,
\item $ \Lambda_I(M) \cong R/I \otimes_R M$,
\item $I\Lambda_I(M)=0$.
    \end{enumerate}
    \end{propn}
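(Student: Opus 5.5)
The plan is to prove the cycle $(1)\Rightarrow(2)\Rightarrow(3)\Rightarrow(4)\Rightarrow(5)\Rightarrow(1)$, where $(2)$ is the stated equality $IM=I^2M$. Throughout I use the descending chain $IM\supseteq I^2M\supseteq I^3M\supseteq\cdots$ and the natural identification $R/J\otimes_R M\cong M/JM$. The equivalence of $(1)$ and $(2)$ is just the definition of $I$-coreducedness. For $(2)\Rightarrow(3)$, the canonical surjection $M/I^2M\to M/IM$ has kernel $IM/I^2M$, which vanishes by $(2)$. Transporting this along $R/J\otimes_R M\cong M/JM$, the canonical map $R/I^2\otimes_R M\to R/I\otimes_R M$ is an isomorphism.

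For $(3)\Rightarrow(4)$, I first show that $IM=I^nM$ for all $n\geq 1$. I read $(3)$ as asserting that the canonical map is an isomorphism; I return to this point at the end. Then $(3)$ gives $IM=I^2M$, and induction gives
\[
I^{n+1}M=I^n(IM)=I^n(I^2M)=I^{n+1}(IM)=I^{n+2}M.
\]
Hence every term $M/I^kM$ of the inverse system equals $M/IM$, and the transition maps are identities. As in the proof of Proposition \ref{dualidealsyst1}, this gives $\Lambda_I(M)\cong M/IM\cong R/I\otimes_R M$. In fact this step is exactly the case $\Phi=\{I^n\}_{n\geq 1}$, $\mathcal L=I$ of Proposition \ref{dualidealsyst1}, once we know $M$ is $\Phi$-coreduced. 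For $(4)\Rightarrow(5)$, note that $I$ annihilates $R/I\otimes_R M$, and any isomorphism of $R$-modules preserves being annihilated by $I$, so $I\Lambda_I(M)=0$.

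The main obstacle is $(5)\Rightarrow(1)$. From $I\Lambda_I(M)=0$ alone one cannot in general recover $IM=I^2M$ without extra control of the canonical map $M\to\Lambda_I(M)$, since $M$ need not be $I$-adically separated or complete. My first attempt would use the projection $\pi_2:\Lambda_I(M)\to M/I^2M$. If $\pi_2$ is surjective, then $I\cdot(M/I^2M)=\pi_2(I\Lambda_I(M))=0$, which gives $IM\subseteq I^2M$ and hence $(1)$.

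Surjectivity of $\pi_2$ holds whenever the completion map is surjective onto each $M/I^kM$. This is automatic for an inverse system of surjections indexed by $\mathbb N$: the Mittag-Leffler condition holds, so every element of $M/I^2M$ lifts to a compatible sequence. I would therefore argue that $\varprojlim_k M/I^kM\to M/I^2M$ is surjective because all transition maps $M/I^{k+1}M\to M/I^kM$ are surjective. One constructs the lifts inductively, choosing a preimage at each level. This makes $(5)\Rightarrow(1)$ go through.

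I would also state explicitly that the isomorphisms in $(3)$ and $(4)$ are the canonical ones, namely the induced map and the map through $\pi_1$. An abstract isomorphism would not suffice for the implication from $(3)$ back to $(2)$.
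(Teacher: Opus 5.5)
The paper gives no proof of this statement. It is quoted from \cite{SsevviiriI2024}, and the nearest in-paper argument is Proposition~\ref{dualidealsyst1} with $\Phi=\{I^n\}_{n\ge 1}$ and $\mathcal L=I$. Your chain $(1)\Rightarrow(2)\Rightarrow(3)\Rightarrow(4)\Rightarrow(5)$ is essentially that argument: the kernel $IM/I^2M$ of the canonical surjection, the stabilization $I^kM=IM$, the constant inverse system, and the fact that $I$ kills $M/IM$.

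Proposition~\ref{dualidealsyst1} stops at ``consequently $\mathcal L\Lambda_\Phi(M)=0$'' and proves no converse. So your $(5)\Rightarrow(1)$ is the genuinely additional ingredient, and it is correct. It is easier than you make it, though. No Mittag-Leffler or inductive lifting is needed, because $\pi_2\circ\eta_M\colon M\to M/I^2M$ is the canonical quotient map. Hence $\pi_2$ is surjective outright, and $I(M/I^2M)=\pi_2(I\Lambda_I(M))=0$ gives $IM\subseteq I^2M$.

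Your lifting argument depends on the index set being $\mathbb N$; the $\eta_M$ argument does not. So the same two lines show, for an arbitrary inverse family $\Phi$, that $\mathcal L\Lambda_\Phi(M)=0$ forces $\mathcal AM=\mathcal LM$ for every $\mathcal A\in\Phi$. This upgrades the last sentence of Proposition~\ref{dualidealsyst1} to an equivalence. For the same reason, your remark that $(5)$ alone cannot give $IM=I^2M$ without extra control of $M\to\Lambda_I(M)$ is misleading: that control is always available.

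One correction: your closing claim that an abstract isomorphism in $(3)$ would not suffice is false. Since $I$ annihilates $R/I\otimes_R M$, any $R$-module isomorphism $R/I\otimes_R M\cong R/I^2\otimes_R M\cong M/I^2M$ forces $I(M/I^2M)=0$, i.e.\ $IM\subseteq I^2M$. This is the same transport of annihilators you already use in $(4)\Rightarrow(5)$. So the proposition holds with $\cong$ read literally in both $(3)$ and $(4)$. You should replace the canonicity caveat with this one-line proof of $(3)\Rightarrow(2)$.
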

\begin{propn}\rm
The class of $\Phi$-coreduced $R$-modules is closed under quotient modules, arbitrary direct sums and direct limits.
\end{propn}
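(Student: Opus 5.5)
The plan is to argue directly from the definition $\mathcal{A}M=\mathcal{L}M$ for all $\mathcal{A}\in\Phi$, mirroring the proof of the corresponding closure result for $\Phi$-reduced modules. The basic tool throughout is that, for an ideal $\mathcal{I}$, the submodule $\mathcal{I}M$ behaves well under surjections and under colimits: images of $\mathcal{I}M$ are $\mathcal{I}$ times the image, and sums and direct limits commute with multiplication by $\mathcal{I}$.

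\emph{Quotients.} Let $M$ be $\Phi$-coreduced, $N\subseteq M$ a submodule, and $\pi:M\to M/N$ the projection. For any ideal $\mathcal{I}$ we have $\mathcal{I}(M/N)=\pi(\mathcal{I}M)=(\mathcal{I}M+N)/N$. Hence $\mathcal{A}M=\mathcal{L}M$ gives $\mathcal{A}(M/N)=\mathcal{L}(M/N)$ for every $\mathcal{A}\in\Phi$. The same argument shows, more generally, that homomorphic images of $\Phi$-coreduced modules are $\Phi$-coreduced.

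\emph{Direct sums.} Let $\{M_i\}$ be a family of $\Phi$-coreduced modules. I will verify that $\mathcal{I}\bigl(\bigoplus_i M_i\bigr)=\bigoplus_i \mathcal{I}M_i$ for any ideal $\mathcal{I}$.
\begin{itemize}
\item The inclusion $\subseteq$ is clear.
\item For $\supseteq$, an element of $\bigoplus_i\mathcal{I}M_i$ is a finite sum of elements of the form $\sum_k a_k m_k$ with $a_k\in\mathcal{I}$ and $m_k\in M_i$ placed in a single coordinate. Each such element lies in $\mathcal{I}\bigl(\bigoplus M_i\bigr)$.
\end{itemize}
Then $\mathcal{A}\bigl(\bigoplus M_i\bigr)=\bigoplus\mathcal{A}M_i=\bigoplus\mathcal{L}M_i=\mathcal{L}\bigl(\bigoplus M_i\bigr)$. (Direct products are deliberately not claimed: there the analogous identity fails, because infinitely many coordinates would need unboundedly long sums.)

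\emph{Direct limits.} Let $(M_j,\varphi_{jk})$ be a direct system of $\Phi$-coreduced modules, with canonical maps $\iota_j:M_j\to M=\varinjlim M_j$. Every element of $M$ has the form $\iota_j(x)$ for some $j$ and some $x\in M_j$. Consequently $\mathcal{I}M=\bigcup_j \iota_j(\mathcal{I}M_j)$.
\begin{itemize}
\item The inclusion $\supseteq$ is obvious.
\item For $\subseteq$, an element $\sum_k a_k y_k$ with finitely many $y_k\in M$ can be lifted to a common index $j$ by directedness. It is then $\iota_j\bigl(\sum_k a_k x_k\bigr)$ with $\sum_k a_k x_k\in\mathcal{I}M_j$.
\end{itemize}
Applying this with $\mathcal{I}=\mathcal{A}$ and with $\mathcal{I}=\mathcal{L}$, and using $\mathcal{A}M_j=\mathcal{L}M_j$ for each $j$, gives $\mathcal{A}M=\mathcal{L}M$.

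Alternatively, the direct-limit case follows from the two previous cases: $\varinjlim M_j$ is a quotient of $\bigoplus M_j$. One could also invoke Proposition \ref{dualidealsyst1}(3), since $R/\mathcal{A}\otimes_R-$ commutes with direct limits, so the canonical map $R/\mathcal{A}\otimes M\to R/\mathcal{L}\otimes M$ is a direct limit of isomorphisms.

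The only step requiring care is the identity for $\mathcal{I}(\varinjlim M_j)$, specifically lifting finitely many elements to a common stage. Using the quotient-of-a-direct-sum description avoids even that, so I would present that route as the main argument.
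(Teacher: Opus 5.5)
Your proof is correct, and for quotients and direct sums it coincides with the paper's argument: $\mathcal{I}(M/N)=(\mathcal{I}M+N)/N$ and $\mathcal{I}\bigl(\bigoplus_i M_i\bigr)=\bigoplus_i\mathcal{I}M_i$. You also verify the second identity, which the paper only asserts.

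The genuine difference is the direct-limit case. The paper argues that multiplication by $\mathcal{A}$ commutes with direct limits and writes $\mathcal{A}M\cong\varinjlim\mathcal{A}M_i=\varinjlim\mathcal{L}M_i\cong\mathcal{L}M$. Your preferred route instead gets direct limits for free from the other two cases, since $\varinjlim M_j$ is a quotient of $\bigoplus_j M_j$. That route needs no analysis of how ideals interact with the limit, and it proves more: a class closed under direct sums and quotients is closed under all colimits (cokernels, pushouts, coequalizers), not only directed ones.

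The paper's route is more direct, but as written with $\cong$ it only yields an abstract isomorphism. $\Phi$-coreducedness requires the equality $\mathcal{A}M=\mathcal{L}M$ of submodules of $M$. Your identity $\mathcal{I}M=\bigcup_j\iota_j(\mathcal{I}M_j)$ is exactly what turns the paper's chain into such an equality, so your first argument is the rigorous form of the paper's.

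One minor correction to your parenthetical on products. If $\mathcal{I}=(a_1,\dots,a_n)$ is finitely generated, then $\mathcal{I}M_i=a_1M_i+\cdots+a_nM_i$, so $\mathcal{I}\prod_i M_i=\prod_i\mathcal{I}M_i$ does hold. The failure you describe can only occur for non-finitely generated ideals. In particular, over a Noetherian ring, products of $\Phi$-coreduced modules are $\Phi$-coreduced.
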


\begin{prf}
Let $M$ be $\Phi$-coreduced and let $N\subseteq M$. For every
$\mathcal{A}\in\Phi$, $\mathcal{A}(M/N)=\frac{\mathcal{A}M+N}{N}
           =\frac{\mathcal{L}M+N}{N} =\mathcal{L}(M/N)$. Hence $M/N$ is $\Phi$-coreduced. Now let $\{M_i\}_{i\in J}$ be a family of $\Phi$-coreduced
modules. For every $\mathcal{A}\in\Phi$,
\[
    A\left(\bigoplus_{i\in J}M_i\right)
    =\bigoplus_{i\in J}\mathcal{A}M_i
    =\bigoplus_{i\in J}\mathcal{L}M_i
    =\mathcal{L}\left(\bigoplus_{i\in J}M_i\right).
\]
Thus arbitrary direct sums of $\Phi$-coreduced modules are $\Phi$-coreduced.
Finally, let $\{M_i\}_{i\in J}$ be a directed system of
$\Phi$-coreduced modules and put $    M=\varinjlim_{i\in J}M_i$. For every ideal $\mathcal{A}$ of $R$, multiplication by $\mathcal{A}$ commutes with
direct limits. Hence
\[
    \mathcal{A}M
    =\mathcal{A}\left(\varinjlim_i M_i\right)
    \cong\varinjlim_i \mathcal{A}M_i.
\]
Since each $M_i$ is $\Phi$-coreduced, $ \mathcal{A}M_i=\mathcal{L}M_i$ for every $i$. Therefore
\[
    \mathcal{A}M
    \cong\varinjlim_i \mathcal{A}M_i
    =\varinjlim_i \mathcal{L}M_i
    \cong \mathcal{L}\left(\varinjlim_i M_i\right)
    =\mathcal{L}M.
\]
Thus $M$ is $\Phi$-coreduced.
\end{prf}
 
\begin{propn}\label{dusl to 2.4}
Let $\Phi_1$ and $\Phi_2$ be inverse families of ideals of a
commutative ring $R$, having greatest proper members $\mathcal{L}_1$ and
$\mathcal{L}_2$, respectively, and suppose that $\mathcal{L}_1=\mathcal{L}_2=:\mathcal{L}$.

\begin{enumerate}
        \item Suppose that for every $\mathcal{B}\in\Phi_2$ there exists
    $\mathcal{A}\in\Phi_1$ such that $\mathcal{A}\subseteq \mathcal{B}$. If $M$ is     $\Phi_1$-coreduced, then $M$ is $\Phi_2$-coreduced. Moreover,
    \[
        \Lambda_{\Phi_1}(M)\cong \Lambda_{\Phi_2}(M)
        \cong M/\mathcal{L}M.
    \]

    \item Suppose that for every $\mathcal{A}\in\Phi_1$ there exists
    $\mathcal{B}\in\Phi_2$ such that $\mathcal{B}\subseteq \mathcal{A}$. If $M$ is     $\Phi_2$-coreduced, then $M$ is $\Phi_1$-coreduced. Moreover,
    \[
        \Lambda_{\Phi_1}(M)\cong \Lambda_{\Phi_2}(M)
                    \cong M/\mathcal{L}M.
    \]
\end{enumerate}
\end{propn}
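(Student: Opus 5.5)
We mirror the proof of Proposition~\ref{2-idealsyst}, replacing annihilators by products $\mathcal{A}M$ and reversing the inclusions. We prove (1) in detail; (2) follows by interchanging the roles of $\Phi_1$ and $\Phi_2$.

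\textbf{Step 1: sandwich each $\mathcal{B}M$.} Let $M$ be $\Phi_1$-coreduced and fix $\mathcal{B}\in\Phi_2$. By hypothesis there is $\mathcal{A}\in\Phi_1$ with $\mathcal{A}\subseteq\mathcal{B}$, so $\mathcal{A}M\subseteq\mathcal{B}M$. Since $\mathcal{L}$ is the greatest member of $\Phi_2$, we also have $\mathcal{B}\subseteq\mathcal{L}$, hence $\mathcal{B}M\subseteq\mathcal{L}M$. As $M$ is $\Phi_1$-coreduced, $\mathcal{A}M=\mathcal{L}M$. This gives the chain
\[
\mathcal{L}M=\mathcal{A}M\subseteq\mathcal{B}M\subseteq\mathcal{L}M,
\]
so $\mathcal{B}M=\mathcal{L}M$. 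Since $\mathcal{B}\in\Phi_2$ was arbitrary, $M$ is $\Phi_2$-coreduced.

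\textbf{Step 2: identify the completions.} $M$ is now coreduced with respect to both families, which share the greatest member $\mathcal{L}$. Applying Proposition~\ref{dualidealsyst1} to each family gives
\[
\Lambda_{\Phi_1}(M)\cong M/\mathcal{L}M\cong\Lambda_{\Phi_2}(M).
\]

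\textbf{Remark on (2).} The same argument applies with the families swapped. Given $\mathcal{A}\in\Phi_1$, choose $\mathcal{B}\in\Phi_2$ with $\mathcal{B}\subseteq\mathcal{A}$. Then
\[
\mathcal{L}M=\mathcal{B}M\subseteq\mathcal{A}M\subseteq\mathcal{L}M,
\]
so $M$ is $\Phi_1$-coreduced, and the completions are identified as in Step~2.

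There is no real obstacle. The only point to check is that both inclusions in the sandwich use the correct greatest member, namely $\mathcal{L}$ as the greatest member of the family containing $\mathcal{B}$ (or $\mathcal{A}$ in part (2)). This is exactly why the hypothesis $\mathcal{L}_1=\mathcal{L}_2$ is needed.
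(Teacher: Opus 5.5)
Your proof is correct and matches the paper's argument essentially verbatim. The sandwich $\mathcal{L}M=\mathcal{A}M\subseteq\mathcal{B}M\subseteq\mathcal{L}M$ gives $\Phi_2$-coreducedness, Proposition~\ref{dualidealsyst1} applied to both families identifies each completion with $M/\mathcal{L}M$, and part (2) is handled symmetrically.
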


\begin{prf}
We prove (1); the proof of (2) is symmetric. Suppose that $M$ is $\Phi_1$-coreduced. Let $\mathcal{B}\in\Phi_2$ be
arbitrary. By hypothesis, there exists $\mathcal{A}\in\Phi_1$ such that
$\mathcal{A}\subseteq \mathcal{B}$. 
Since $M$ is $\Phi_1$-coreduced, we have
$\mathcal{A}M=\mathcal{L}_1M=\mathcal{L}M$. On the other hand, since $\mathcal{A}\subseteq \mathcal{B}$, we have $\mathcal{A}M\subseteq \mathcal{B}M$. Moreover, because $\mathcal{L}=\mathcal{L}_2$ is the greatest proper member of
    $\Phi_2$, every $\mathcal{B}\in\Phi_2$ satisfies $\mathcal{B}\subseteq \mathcal{L}$, and hence $\mathcal{B}M\subseteq\mathcal{L}M$.
Consequently, $\mathcal{L}M=\mathcal{A}M\subseteq \mathcal{B}M\subseteq \mathcal{L}M$. 
Therefore $\mathcal{B}M=\mathcal{L}M$.  Since $\mathcal{B}\in\Phi_2$ was arbitrary, it follows that $\mathcal{B}M=\mathcal{L}M$  for every $\mathcal{B}\in\Phi_2$.
Thus $M$ is $\Phi_2$-coreduced. Since $M$ is both $\Phi_1$-coreduced and $\Phi_2$-coreduced,
Proposition~\ref{dualidealsyst1} yields natural isomorphisms $\Lambda_{\Phi_1}(M)
        \cong M/\mathcal{L}_1M 
    = M/\mathcal{L}M$
and $\Lambda_{\Phi_2}(M)
    \cong M/\mathcal{L}_2M
    = M/\mathcal{L}M$.  Hence
\[
    \Lambda_{\Phi_1}(M)
    \cong \Lambda_{\Phi_2}(M)
    \cong M/\mathcal{L}M.
\]

\end{prf} 

\begin{propn}\rm \label{dual to systms}
  Let $I$ be an ideal of a ring $R$. Suppose that $\Phi=\{ J_1, J_2, J_3,\cdots\}$ is an inverse family of ideals of a ring $R$  such that $ J_1 \supseteq J_2 \supseteq J_3\supseteq \cdots$ and $I=J_1$. If  $ J_n\subseteq I^n ~(\text{resp}.~ I^n \subseteq J_n)$ for all positive integers $n \geq 2$, then whenever $M$ is  $\Phi$-coreduced (resp. $I$-coreduced),   $M$ is also $I$-coreduced (resp. $\Phi$-coreduced). In both cases, $\Lambda_\Phi(M)\cong\Lambda_I(M)$.  
\end{propn}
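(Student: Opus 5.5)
This proposition is a special case of Proposition~\ref{dusl to 2.4}, in the same way that Proposition~\ref{phi vs I} specializes Proposition~\ref{2-idealsyst}. I take $\Phi_1$ and $\Phi_2$ to be $\Phi=\{J_n\}_{n\ge 1}$ and the family $\{I^n\}_{n\ge1}$, assigned according to the case. Both families are inverse families indexed by $\mathbb{Z}^+$. Since $J_1=I$ and the chains are descending, both have greatest proper member $\mathcal L=I$. For the family of powers, $\{I^n\}$-coreducedness is exactly $I$-coreducedness, because $IM=I^2M$ forces $IM=I^nM$ for all $n$. Likewise $\Lambda_{\{I^n\}}(M)=\Lambda_I(M)$ by definition.

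\emph{First case} (hypothesis $J_n\subseteq I^n$ for $n\ge2$, $M$ $\Phi$-coreduced). I put $\Phi_1=\Phi$ and $\Phi_2=\{I^n\}$. The hypothesis of Proposition~\ref{dusl to 2.4}(1) requires that every $\mathcal B=I^n\in\Phi_2$ contain some member of $\Phi_1$. For $n\ge2$ I choose $J_n\subseteq I^n$, and for $n=1$ I choose $J_1=I$. Proposition~\ref{dusl to 2.4}(1) then shows that $M$ is $\{I^n\}$-coreduced, hence $I$-coreduced. It also gives $\Lambda_\Phi(M)\cong\Lambda_I(M)\cong M/IM$.

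\emph{Second case} (hypothesis $I^n\subseteq J_n$, $M$ $I$-coreduced). I again put $\Phi_1=\Phi$ and $\Phi_2=\{I^n\}$, now using Proposition~\ref{dusl to 2.4}(2). Its hypothesis requires that every $\mathcal A=J_n$ contain some $I^m$, and $I^n\subseteq J_n$ (with $I^1=J_1$) supplies this. Since $M$ is $\{I^n\}$-coreduced, it is $\Phi$-coreduced, with the same isomorphism of completions.

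Alternatively, the argument can be given directly by a sandwich. In the first case, $IM=J_nM\subseteq I^nM\subseteq IM$. In the second case, $IM=I^nM\subseteq J_nM\subseteq IM$. In either case both families stabilize at $IM$, and Proposition~\ref{dualidealsyst1} identifies both completions with $M/IM$.

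The only point that needs care is matching the paper's formulation of $I$-coreducedness, namely $I^2M=IM$, with $\{I^n\}$-coreducedness. This follows by induction, since $I^{n+1}M=I(I^nM)=I(IM)=I^2M=IM$. The remainder of the argument is bookkeeping of which family plays $\Phi_1$ and which plays $\Phi_2$.
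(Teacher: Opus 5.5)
Your proposal is correct and follows the paper's route. The paper's proof consists only of the remark that this is a special case of Proposition~\ref{dusl to 2.4}; your choice of $\Phi_1=\Phi$ and $\Phi_2=\{I^n\}_{n\ge1}$, using part (1) in the first case and part (2) in the second, together with your check that $I^2M=IM$ is equivalent to $\{I^n\}$-coreducedness, supplies exactly the details the paper leaves implicit.
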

\begin{prf}\rm
This is a special case of Proposition \ref{dusl to 2.4}.
\end{prf}
\begin{exam}\rm
   Let $I:=2\mathbb{Z}$  be an ideal of a ring $R:=\mathbb{Z}$. Suppose that $\Phi:=\{J_n=2^n\cdot3^{n-1}|n\geq 1\}$ is an inverse family of ideals of $R$ with $J_1=I$. Clearly $J_n\subseteq I^n$ for $n\geq 2$. Take the $\mathbb{Z}$-module $M=\mathbb{Z}/2\mathbb{Z}$. Since $J_nM=J_1M=IM=I^nM=0$, $M$ is $\Phi$-coreduced and also $I$-coreduced. 
\end{exam}

\subsection{Relations between $\Phi$-reduced and
$\Phi$-coreduced modules}

\begin{propn}\label{red-coreduced-progenerator}\rm
Let $\Phi$ be an inverse family of ideals of a commutative ring $R$ having a greatest proper member $\mathcal L$ with respect to inclusion.
\begin{enumerate}
\item If $M$ is a progenerator and is $\Phi$-coreduced, then $M$ is $\Phi$-reduced.
\item If $M$ is an injective cogenerator and is $\Phi$-reduced, then $M$ is $\Phi$-coreduced.
\end{enumerate}
\end{propn}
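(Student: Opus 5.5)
The plan is to show that in both situations the hypothesis forces every $\mathcal A\in\Phi$ to equal $\mathcal L$, at least as far as $M$ can detect. Once that holds, the other condition is automatic: if $\mathcal A=\mathcal L$, then $\mathcal Am=0$ trivially gives $\mathcal Lm=0$, and $\mathcal AM=\mathcal LM$ trivially. So the proof reduces to two "injectivity" statements. First, for a progenerator $M$, the assignment $I\mapsto IM$ is injective on ideals. Second, for a cogenerator $E$, the assignment $I\mapsto(0:_E I)$ is injective on ideals.

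For (1), let $M$ be a progenerator and suppose $\mathcal AM=\mathcal LM$ for a given $\mathcal A\in\Phi$.
\begin{enumerate}
\item Since $M$ is a generator and $R$ is finitely generated, $R$ is a direct summand of some $M^n$. Write $M^n=R'\oplus N$ with $R'\cong R$.
\item For any ideal $I$ we have $IM^n=IR'\oplus IN$, with $IR'\cong I$, because multiplication by an ideal respects direct sum decompositions.
\item From $\mathcal AM=\mathcal LM$ we get $\mathcal AM^n=(\mathcal AM)^n=(\mathcal LM)^n=\mathcal LM^n$.
\item Project onto the summand $R'$ along $N$. This gives $\mathcal AR'=\mathcal LR'$, and hence $\mathcal A=\mathcal L$.
\end{enumerate}
Consequently, if $m\in M$ satisfies $\mathcal Am=0$, then $\mathcal Lm=0$, so $M$ is $\Phi$-reduced. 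Equivalently, condition (2) of Proposition \ref{p2} holds. Projectivity and finite generation are used only to ensure that $M$ is a genuine generator with $R$ a summand of a finite power.

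For (2), let $E$ be an injective cogenerator and suppose $(0:_E\mathcal A)=(0:_E\mathcal L)$, as given by Proposition \ref{p2}.
\begin{enumerate}
\item For every ideal $I$, identify $(0:_E I)\cong\operatorname{Hom}_R(R/I,E)$, as in the proof of Proposition \ref{p2}.
\item Show that $\operatorname{ann}_R(0:_E I)=I$. The inclusion $\supseteq$ is clear.
\item For the reverse inclusion, take $r\notin I$. Then $r+I$ is a nonzero element of $R/I$. Because $E$ is a cogenerator, there is $f\in\operatorname{Hom}_R(R/I,E)$ with $f(r+I)\neq 0$, that is, $r\,f(1+I)\neq 0$. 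Since $f(1+I)\in(0:_E I)$, the element $r$ does not annihilate $(0:_E I)$.
\item Taking annihilators of both sides of $(0:_E\mathcal A)=(0:_E\mathcal L)$ gives $\mathcal A=\mathcal L$. In particular $\mathcal AE=\mathcal LE$, so $E$ is $\Phi$-coreduced.
\end{enumerate}

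I expect the only real obstacle to be the annihilator computation $\operatorname{ann}_R(0:_E I)=I$ in part (2), and its dual, the summand argument in part (1). Both are standard Morita/cogenerator facts, and injectivity of $E$ is not actually needed. It is worth remarking that the conclusion is stronger than stated: in either case $\Phi$ collapses to $\{\mathcal L\}$.
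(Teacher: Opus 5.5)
Your proof is correct. It reaches the same intermediate conclusion as the paper, namely that every $\mathcal A\in\Phi$ equals $\mathcal L$, but the key step is different.

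The paper argues functorially. By Propositions \ref{dualidealsyst1} and \ref{p2}, the hypotheses say that $q_{\mathcal A}\otimes_R 1_M$, respectively $\operatorname{Hom}_R(q_{\mathcal A},M)$, is an isomorphism for the canonical surjection $q_{\mathcal A}\colon R/\mathcal A\to R/\mathcal L$. For a progenerator, $-\otimes_R M$ is exact and faithful (faithful flatness). For an injective cogenerator, $\operatorname{Hom}_R(-,M)$ is exact and faithful. Both functors therefore reflect isomorphisms, so $q_{\mathcal A}$ is bijective.

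You instead prove directly that $I\mapsto IM$ is injective on ideals when $M$ is a generator. You also show that $I\mapsto(0:_E I)$ is injective when $E$ is a cogenerator, via $\operatorname{ann}_R(0:_E I)=I$.

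The paper's version makes the two parts formally dual and reuses the Hom/tensor characterizations already established. Yours is more elementary and shows the hypotheses can be weakened: part (1) holds for any generator and part (2) for any cogenerator. The paper uses exactness (flatness of $M$, injectivity of $E$) only to upgrade faithfulness to reflection of isomorphisms.

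One small correction to your closing remark in part (1): projectivity and finite generation of $M$ play no role at all. What you use is that $R$ is cyclic and free, which lets a surjection $M^{(X)}\to R$ be cut down to a split surjection $M^n\to R$. In fact any surjection $\phi\colon M^n\to R$ already suffices, with no splitting needed, because $\phi(IM^n)=I\,\phi(M^n)=I$ for every ideal $I$.
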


\begin{prf}
We prove the two assertions separately.

\medskip
\noindent
\textbf{(1)} Suppose that $M$ is a progenerator and is $\Phi$-coreduced. Let $\mathcal A\in\Phi$. Since $M$ is $\Phi$-coreduced, Proposition~\ref{dualidealsyst1} implies that the canonical morphism 
$$
R/\mathcal A\otimes_R M
\longrightarrow
R/\mathcal L\otimes_R M,
$$

induced by the canonical epimorphism $
q_{\mathcal A}:R/\mathcal A\longrightarrow R/\mathcal L$, 
is an isomorphism. Since $M$ is a progenerator, it is, in particular, faithfully flat. Hence the functor $-\otimes_R M$
reflects isomorphisms. Therefore, since

$$
q_{\mathcal A}\otimes_R 1_M:
R/\mathcal A\otimes_R M
\longrightarrow
R/\mathcal L\otimes_R M
$$

is an isomorphism, the canonical map $q_{\mathcal A}:R/\mathcal A\longrightarrow R/\mathcal L$ itself is an isomorphism. 
Now $\mathcal A\subseteq\mathcal L$, and the kernel of $q_{\mathcal A}$ is
$\ker(q_{\mathcal A})=\mathcal L/\mathcal A$. Thus $\mathcal L/\mathcal A=0$, and hence $\mathcal A=\mathcal L$.  Since $\mathcal A\in\Phi$ was arbitrary, every member of $\Phi$ is equal to $\mathcal L$. 
It follows immediately that, for every $m\in M$ and every $\mathcal A\in\Phi$,
$\mathcal Am=0$ implies that  $\mathcal Lm=0$, because $\mathcal A=\mathcal L$. Therefore $M$ is $\Phi$-reduced.

\medskip
\noindent
\textbf{(2)} Suppose now that $M$ is an injective cogenerator and is $\Phi$-reduced. Let $\mathcal A\in\Phi$. Since $M$ is $\Phi$-reduced, the canonical morphism

$$
\operatorname{Hom}_R(R/\mathcal L,M)
\longrightarrow
\operatorname{Hom}_R(R/\mathcal A,M),
$$

induced by the canonical epimorphism $q_{\mathcal A}:R/\mathcal A\longrightarrow R/\mathcal L$,  is an isomorphism. Since $M$ is an injective cogenerator, the contravariant functor  $\operatorname{Hom}_R(-,M)$ is exact and faithful, and hence reflects isomorphisms. Therefore the canonical morphism $q_{\mathcal A}:R/\mathcal A\longrightarrow R/\mathcal L$ is an isomorphism. As above, $\ker(q_{\mathcal A})=\mathcal L/\mathcal A$.  Hence $\mathcal L/\mathcal A=0$, so $\mathcal A=\mathcal L$.  Since $\mathcal A\in\Phi$ was arbitrary, every member of $\Phi$ is equal to $\mathcal L$. Consequently, $\mathcal AM=\mathcal LM$  for every $\mathcal A\in\Phi$. Thus $M$ is $\Phi$-coreduced. 
\end{prf}

\begin{propn}\rm\label{Hom-reversed1}
Let $\Phi$ be an inverse family of ideals of a commutative ring
$R$ having greatest proper member $L$, and let $M$ be an
$R$-module.

\begin{enumerate}
    \item If $M$ is $\Phi$-coreduced, then
    $\operatorname{Hom}_R(M,K)$ is $\Phi$-reduced for every
    $R$-module $K$.

    \item Conversely, if $K$ is an injective cogenerator and
    $\operatorname{Hom}_R(M,K)$ is $\Phi$-reduced, then $M$ is
    $\Phi$-coreduced.
\end{enumerate}
\end{propn}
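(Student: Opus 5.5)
The plan is to combine the Hom--tensor adjunction with the characterizations in Propositions~\ref{p2} and~\ref{dualidealsyst1}. For any ideal $\mathcal I$ of $R$ and any $R$-module $K$, there is a natural isomorphism
\[
\operatorname{Hom}_R\bigl(R/\mathcal I,\operatorname{Hom}_R(M,K)\bigr)\cong \operatorname{Hom}_R\bigl(R/\mathcal I\otimes_R M,K\bigr).
\]
By naturality in the first variable, for $\mathcal A\in\Phi$ the map induced by the canonical epimorphism $q_{\mathcal A}:R/\mathcal A\to R/\mathcal L$ on the left corresponds to $\operatorname{Hom}_R(q_{\mathcal A}\otimes_R 1_M,K)$ on the right. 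One can also see this concretely: $(0:_{\operatorname{Hom}(M,K)}\mathcal I)\cong\operatorname{Hom}_R(M/\mathcal IM,K)$, because $\mathcal I f=0$ if and only if $f$ vanishes on $\mathcal IM$. Under these identifications, the inclusion $(0:\mathcal L)\subseteq(0:\mathcal A)$ becomes precomposition with $M/\mathcal AM\to M/\mathcal LM$. It may be cleanest to argue with annihilators directly.

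\textbf{(1)} Assume $M$ is $\Phi$-coreduced. Let $f\in\operatorname{Hom}_R(M,K)$ and $\mathcal A\in\Phi$ with $\mathcal Af=0$. Then $f(\mathcal AM)=\mathcal Af(M)=0$. Since $\mathcal AM=\mathcal LM$, we get $f(\mathcal LM)=0$, that is, $\mathcal Lf=0$. This is exactly the definition of $\Phi$-reducedness, so no hypothesis on $K$ is needed. Equivalently, Proposition~\ref{dualidealsyst1}(3) makes $q_{\mathcal A}\otimes 1_M$ an isomorphism, so its image under $\operatorname{Hom}_R(-,K)$ is one as well, and Proposition~\ref{p2}(4) applies.

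\textbf{(2)} Assume $K$ is an injective cogenerator and $\operatorname{Hom}_R(M,K)$ is $\Phi$-reduced. Fix $\mathcal A\in\Phi$. By Proposition~\ref{p2}(4) and the adjunction above, the map
\[
\operatorname{Hom}_R(q_{\mathcal A}\otimes 1_M,K):\operatorname{Hom}_R(R/\mathcal L\otimes_R M,K)\to\operatorname{Hom}_R(R/\mathcal A\otimes_R M,K)
\]
is an isomorphism. Since $K$ is an injective cogenerator, $\operatorname{Hom}_R(-,K)$ is exact and faithful, so it reflects isomorphisms, as in the proof of Proposition~\ref{red-coreduced-progenerator}(2). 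Hence $q_{\mathcal A}\otimes 1_M$ is an isomorphism, and Proposition~\ref{dualidealsyst1}(3)$\Rightarrow$(1) shows that $M$ is $\Phi$-coreduced.

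A more hands-on version of (2) avoids citing the reflection property. Put $N=\mathcal LM/\mathcal AM$, which is the kernel of $M/\mathcal AM\to M/\mathcal LM$. Any $g\in\operatorname{Hom}_R(N,K)$ extends by injectivity to $M/\mathcal AM$, giving an $f\in\operatorname{Hom}_R(M,K)$ with $\mathcal Af=0$. Reducedness then gives $\mathcal Lf=0$, so $f$ vanishes on $\mathcal LM$ and hence $g=0$. Thus $\operatorname{Hom}_R(N,K)=0$, and since $K$ is a cogenerator, $N=0$. The only real point to check is the naturality that identifies the Hom-induced map with precomposition by $q_{\mathcal A}\otimes 1_M$. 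I expect this to be routine, and the elementwise argument sidesteps it entirely.
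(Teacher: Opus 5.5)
Your proof is correct. Part (1) is the paper's argument: the paper also notes that $(0:_H\mathcal A)=\{f\in H\mid f(\mathcal AM)=0\}$ for $H=\operatorname{Hom}_R(M,K)$, so $\mathcal AM=\mathcal LM$ forces $(0:_H\mathcal A)=(0:_H\mathcal L)$, and then cites Proposition~\ref{p2}. You simply check the definition directly.

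For (2), your ``hands-on version'' is also the paper's proof. The paper argues by contradiction: if $\mathcal AM\subsetneq\mathcal LM$, the cogenerator gives a nonzero $g:\mathcal LM/\mathcal AM\to K$, injectivity extends it to $M/\mathcal AM$, and the resulting $f$ lies in $(0:_H\mathcal A)$ but not in $(0:_H\mathcal L)$. You instead show $\operatorname{Hom}_R(\mathcal LM/\mathcal AM,K)=0$ and then apply the cogenerator property, which is the same argument.

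Your primary route for (2) is genuinely different. You transport Proposition~\ref{p2}(4) through the adjunction $\operatorname{Hom}_R(R/\mathcal I,\operatorname{Hom}_R(M,K))\cong\operatorname{Hom}_R(R/\mathcal I\otimes_RM,K)$, which is natural in the first variable. You then use that $\operatorname{Hom}_R(-,K)$ reflects isomorphisms, which lands you in Proposition~\ref{dualidealsyst1}(3). This makes the remark after the proposition literal: reducedness and coreducedness are exchanged by $\operatorname{Hom}_R(-,K)$. It also mirrors the proof of Proposition~\ref{red-coreduced-progenerator}(2).

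The price is two black boxes: naturality of the adjunction, and the fact that an exact faithful contravariant functor reflects isomorphisms. The elementwise argument unpacks exactly these. Injectivity supplies the extension, and the cogenerator property detects $\mathcal LM/\mathcal AM\neq 0$. Since $q_{\mathcal A}\otimes 1_M$ is already surjective, only its kernel ever needs to be detected.
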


\begin{prf} 
Put $    H=\operatorname{Hom}_R(M,K)$. For every ideal $\mathcal{A}$ of $R$, we have
$(0:_H \mathcal{A})     =     \{f\in\operatorname{Hom}_R(M,K)\mid f(\mathcal{A}M)=0\}$. Indeed, for $f\in H$,
$    \mathcal{}Af=0$ if and only if $f(am)=0$ for all $a\in\mathcal{A},\ m\in M$, which is equivalent to $f(\mathcal{A}M)=0$. Suppose first that $M$ is $\Phi$-coreduced. Then, for every
$\mathcal{A}\in\Phi$, $\mathcal{A}M=\mathcal{L}M$. Consequently,
\[
\begin{aligned}
    (0:_H \mathcal{A})
    &=
    \{f\in H\mid f(\mathcal{A}M)=0\}\\
    &=
    \{f\in H\mid f(\mathcal{L}M)=0\}\\
    &=
    (0:_H \mathcal{L}).
\end{aligned}
\]
By Proposition~\ref{p2}, $H$ is $\Phi$-reduced. Conversely, suppose that $K$ is an injective cogenerator and that $H=\operatorname{Hom}_R(M,K)$ is $\Phi$-reduced. We prove that $\mathcal{A}M=\mathcal{L}M$ for every $\mathcal{A}\in\Phi$. Fix $\mathcal{A}\in\Phi$. Since $\mathcal{A}\subseteq \mathcal{L}$, we always have
$\mathcal{A}M\subseteq \mathcal{L}M$. Suppose, for contradiction, that
$\mathcal{A}M\subsetneq \mathcal{L}M$. Then $\mathcal{L}M/\mathcal{A}M\neq0$. Since $K$ is a cogenerator, there exists a nonzero homomorphism $g:\mathcal{L}M/\mathcal{A}M\longrightarrow K$. Now $\mathcal{L}M/\mathcal{A}M$ is naturally a submodule of $M/\mathcal{A}M$. Since $K$ is
injective, $g$ extends to a homomorphism
$   \widetilde g:M/\mathcal{A}M\longrightarrow K$. Let
$ \pi:M\longrightarrow M/\mathcal{A}M$ be the canonical projection and define $f=\widetilde g\circ\pi:M\longrightarrow K$. Since $f(\mathcal{A}M)=0$, we have $f\in(0:_H \mathcal{A})$. On the other hand, the restriction of $f$ to $\mathcal{L}M$ induces the
nonzero map $g$, and hence $ f(\mathcal{L}M)\neq0$. Therefore
$f\notin(0:_H \mathcal{L})$. Thus $  (0:_H\mathcal{A})\neq(0:_H\mathcal{L})$, 
contradicting the assumption that $H$ is $\Phi$-reduced.
Hence $\mathcal{A}M=\mathcal{L}M$ for every $\mathcal{A}\in\Phi$, and therefore $M$ is $\Phi$-coreduced. 
\end{prf}

\begin{rem}\rm 
The notions of $\Phi$-reducedness and $\Phi$-coreducedness are
dual stabilization conditions. Namely, $M$ is $\Phi$-reduced
precisely when  $ (0:_M \mathcal{A})=(0:_M \mathcal{L})$ 
   for every  $\mathcal{A}\in\Phi$, whereas $M$ is $\Phi$-coreduced precisely when
$\mathcal{A}M=\mathcal{L}M$ for every $\mathcal{A}\in\Phi$. Thus the former is an annihilator stabilization condition and the latter is an image stabilization condition. Proposition~\ref{Hom-reversed1} shows that these two conditions are interchanged by
$\operatorname{Hom}_R(-,K)$ when $K$ is an injective cogenerator.
\end{rem}

\section{A GM-type adjunction $\&$ an MGM-type phenomenon}

\begin{paragraph}\noi
 For an $R$-module $M$, the canonical $\Phi$-adic completion morphism is

$$
\eta_M:M\longrightarrow\Lambda_\Phi(M)
=\varprojlim_{\mathcal A\in\Phi}M/\mathcal AM,
\qquad
m\longmapsto (m+\mathcal AM)_{\mathcal A\in\Phi}.
$$

The module $M$ is said to be \emph{$\Phi$-adically complete} if $\eta_M$ is an isomorphism. An $R$-module $M$ is said to be \emph{$\Phi$-torsion} if $\Gamma_\Phi(M)=M$, see \cite{Brodmann2013}. Let $(R\text{-Mod})_{\Phi\text{-red}}$ (resp. $(R\text{-Mod})_{\Phi\text{-cor}}$) denote the full subcategory of $R$-Mod consisting of $\Phi$-reduced (resp. $\Phi$-coreduced) $R$-modules.
\end{paragraph}

\begin{thm}\label{GGM-Duality}{\rm\bf[GM-type  adjunction]}
	Let $\Phi$ be an inverse family of ideals of $R$ with $\mathcal{L}$ as a greatest proper member in $\Phi$. The functors
	
\begin{equation}\label{systitorsn}
	\Gamma_{\Phi}: (R\text{-Mod})_{\Phi\text{-red}} \longrightarrow (R\text{-Mod})_{\Phi\text{-cor}}
	\end{equation}
	 and
\begin{equation}\label{systadic}		
	\Lambda_{\Phi} : (R\text{-Mod})_{\Phi\text{-cor}} \longrightarrow (R\text{-Mod})_{\Phi\text{-red}}
\end{equation}	
exist and  form an adjoint pair such that for any 
	 $N \in (R\text{-Mod})_{\Phi\text{-red}}$ and $M \in (R\text{-Mod})_{\Phi\text{-cor}}$, we have
	$$
	\text{Hom}_R(\Lambda_{\Phi}(M), N) \cong \text{Hom}_R(M, \Gamma_{\Phi}(N))$$ naturally in \(M\) and $N$.
\end{thm}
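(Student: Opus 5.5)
The plan is to reduce everything to the single ideal $\mathcal L$, using Propositions~\ref{p2} and~\ref{dualidealsyst1}, and then invoke the classical Hom--tensor adjunction $R/\mathcal L\otimes_R-\dashv\operatorname{Hom}_R(R/\mathcal L,-)$.

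\textbf{Step 1: the functors land in the stated subcategories.} The key observation is that every $R$-module $X$ with $\mathcal LX=0$ is both $\Phi$-reduced and $\Phi$-coreduced.
\begin{itemize}
\item It is $\Phi$-reduced, because $\mathcal Lm=0$ holds for every $m\in X$.
\item It is $\Phi$-coreduced, because $\mathcal AX\subseteq\mathcal LX=0$ gives $\mathcal AX=0=\mathcal LX$ for every $\mathcal A\in\Phi$.
\end{itemize}
Now take $N$ $\Phi$-reduced. Proposition~\ref{p2} gives $\Gamma_\Phi(N)=(0:_N\mathcal L)$, so $\mathcal L\Gamma_\Phi(N)=0$. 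Hence $\Gamma_\Phi(N)\in(R\text{-Mod})_{\Phi\text{-cor}}$.

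Take $M$ $\Phi$-coreduced. Proposition~\ref{dualidealsyst1} gives $\Lambda_\Phi(M)\cong M/\mathcal LM$ and $\mathcal L\Lambda_\Phi(M)=0$. Hence $\Lambda_\Phi(M)\in(R\text{-Mod})_{\Phi\text{-red}}$.

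Both $\Gamma_\Phi$ and $\Lambda_\Phi$ are functors on all of $R$-Mod. Both subcategories are full, so the restrictions are functors between them. This establishes (\ref{systitorsn}) and (\ref{systadic}).

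\textbf{Step 2: the bijection.} For $M$ $\Phi$-coreduced and $N$ $\Phi$-reduced we have the chain
\[
\operatorname{Hom}_R(\Lambda_\Phi(M),N)\cong\operatorname{Hom}_R(M/\mathcal LM,N)\cong\operatorname{Hom}_R(M,(0:_N\mathcal L))=\operatorname{Hom}_R(M,\Gamma_\Phi(N)).
\]
The middle isomorphism sends $f$ to $f\circ\pi$, where $\pi:M\to M/\mathcal LM$ is the projection. This is well defined: $\mathcal L f(\pi(m))=f(\pi(\mathcal Lm))=0$, so $f\circ\pi$ lands in $(0:_N\mathcal L)$. Conversely, any $g:M\to(0:_N\mathcal L)$ kills $\mathcal LM$, because $g(am)=ag(m)=0$ for $a\in\mathcal L$. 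Hence $g$ factors uniquely through $\pi$.

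Equivalently, this is the tensor--Hom adjunction
\[
\operatorname{Hom}_R(R/\mathcal L\otimes_R M,N)\cong\operatorname{Hom}_R(M,\operatorname{Hom}_R(R/\mathcal L,N)),
\]
combined with the natural identifications $R/\mathcal L\otimes_RM\cong M/\mathcal LM$ and $\operatorname{Hom}_R(R/\mathcal L,N)\cong(0:_N\mathcal L)$.

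\textbf{Step 3: naturality, the main point of care.} The identifications from Step~2 must be natural in $M$ and $N$ as functors, not merely objectwise isomorphisms.

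For $\Gamma_\Phi$ this is clear. The equality $\Gamma_\Phi(N)=(0:_N\mathcal L)$ is an equality of submodules of $N$, and a morphism $N\to N'$ restricts compatibly on both sides.

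For $\Lambda_\Phi$, I would make the isomorphism $\Lambda_\Phi(M)\cong M/\mathcal LM$ explicit. Take it to be the canonical projection $p_{\mathcal L}:\varprojlim_{\mathcal A}M/\mathcal AM\to M/\mathcal LM$ onto the $\mathcal L$-component. By Proposition~\ref{dualidealsyst1}, the inverse system is constant with identity transition maps, so $p_{\mathcal L}$ is an isomorphism. Its inverse is $m+\mathcal LM\mapsto(m+\mathcal AM)_{\mathcal A}$, which is induced by $\eta_M$.

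Projections from a limit are natural: for $u:M\to M'$ we have $p_{\mathcal L}\circ\Lambda_\Phi(u)=\bar u\circ p_{\mathcal L}$, where $\bar u:M/\mathcal LM\to M'/\mathcal LM'$ is the induced map. Precomposition with $\pi$ is natural in $M$, and postcomposition is natural in $N$. Composing these natural isomorphisms gives the required natural bijection, so $\Lambda_\Phi\dashv\Gamma_\Phi$ on the indicated subcategories.
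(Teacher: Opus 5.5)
Your proposal is correct and follows essentially the same route as the paper: it uses Propositions~\ref{p2} and~\ref{dualidealsyst1} to see that $\Gamma_\Phi$ and $\Lambda_\Phi$ land in modules killed by $\mathcal L$, hence in the required subcategories, and then transports the adjunction $R/\mathcal L\otimes_R-\dashv\operatorname{Hom}_R(R/\mathcal L,-)$. Your Step~3 makes the isomorphism $\Lambda_\Phi(M)\cong M/\mathcal LM$ explicit as the projection onto the $\mathcal L$-component and checks that it is natural, which fills in a point the paper leaves implicit.
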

\begin{prf}
	Let $N \in (R\text{-Mod})_{\Phi\text{-red}}$. By Proposition \ref{p2}, $\Gamma_{\Phi}(N) \cong \text{Hom}_R(R/\mathcal{L},N)
	$. It follows that $\mathcal{L}\Gamma_{\Phi}(N) = 0$ so that $\mathcal{A}\Gamma_{\Phi}(N) =\mathcal{L}\Gamma_{\Phi}(N) = 0$ and $\Gamma_{\Phi}(N)\in (R\text{-Mod})_{\Phi\text{-cor}}$. This establishes  Functor (\ref{systitorsn}). Now, let $M \in (R\text{-Mod})_{\Phi\text{-cor}}$. By Proposition \ref{dualidealsyst1}, $\Lambda_{\Phi}(M)\cong R/\mathcal{L}\otimes_R M$. So, $\mathcal{L}\Lambda_{\Phi}(M)=0$ and $\mathcal{A}\Lambda_{\Phi}(M)=0$. This implies that $(0:_{\Lambda_{\Phi}(M)} \mathcal{L})=(0:_{\Lambda_{\Phi}(M)} \mathcal{A})=\Lambda_{\Phi}(M)$. By Proposition \ref{p2}, $\Lambda_{\Phi}(M)\in (R\text{-Mod})_{\Phi\text{-red}}$ which establishes Functor (\ref{systadic}). The two functors (\ref{systitorsn}) and (\ref{systadic}) are adjoint since for $N \in (R\text{-Mod})_{\Phi\text{-red}}, \Gamma_{\Phi}(N)\cong \text{Hom}_R (R/\mathcal{L}, N)$ and for $M \in (R\text{-Mod})_{\Phi\text{-cor}}, ~\Lambda_{\Phi}(M)\cong R/\mathcal{L}\otimes_R M$. Moreover, $R/\mathcal{L}\otimes-$ is the left adjoint of $\text{Hom}_R(R/\mathcal{L},-)$.
\end{prf}

\begin{thm}\label{MGM-equivalence}{\rm\bf [MGM-type characterization]}
Let $\Phi$ be an inverse family of ideals of a ring $R$ having a greatest proper member $\mathcal L$ with respect to inclusion.   Then the following statements are equivalent:
\begin{enumerate}
\item $M$ is both $\Phi$-torsion and $\Phi$-reduced;
\item $M$ is both $\Phi$-adically complete and $\Phi$-coreduced;
\item $\mathcal LM=0$.
\end{enumerate}
Moreover, under any of these equivalent conditions,
$\Gamma_\Phi(M)=M$ and $\Lambda_\Phi(M)\cong M$ naturally.
\end{thm}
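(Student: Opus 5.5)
I will show $(1)\Leftrightarrow(3)$ and $(2)\Leftrightarrow(3)$, using Propositions \ref{p2} and \ref{dualidealsyst1} for the nontrivial directions.

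\emph{$(1)\Rightarrow(3)$.} If $M$ is $\Phi$-reduced, Proposition~\ref{p2}(5) gives $\Gamma_\Phi(M)=(0:_M\mathcal L)$. If $M$ is also $\Phi$-torsion, then $M=\Gamma_\Phi(M)=(0:_M\mathcal L)$, so $\mathcal LM=0$.

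\emph{$(3)\Rightarrow(1)$.} If $\mathcal LM=0$, then every $m\in M$ satisfies $\mathcal Lm=0$. Hence $M$ is $\Phi$-reduced by definition. Moreover $\mathcal L\in\Phi$, so $M=(0:_M\mathcal L)\subseteq\Gamma_\Phi(M)$, and $M$ is $\Phi$-torsion.

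\emph{$(3)\Rightarrow(2)$.} For every $\mathcal A\in\Phi$ we have $\mathcal AM\subseteq\mathcal LM=0$, so $\mathcal AM=\mathcal LM=0$ and $M$ is $\Phi$-coreduced. Each term $M/\mathcal AM$ of the inverse system equals $M$, and each transition map is the identity, so the system is constant. I will check that $\eta_M$ is exactly the canonical isomorphism from $M$ to the limit of this constant system. Here directedness of $\Lambda$ matters: the limit of a constant system over a directed (hence connected, nonempty) poset is the common value. Thus $M$ is $\Phi$-adically complete.

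\emph{$(2)\Rightarrow(3)$.} This is the step that needs care, because Proposition~\ref{dualidealsyst1} only gives an abstract isomorphism $\Lambda_\Phi(M)\cong M/\mathcal LM$. I need a statement about the specific map $\eta_M$. So I will track maps explicitly. Under $\Phi$-coreducedness the system $(M/\mathcal AM)$ is constant with value $M/\mathcal LM$, and the limit is identified with $M/\mathcal LM$ via the projection to the $\mathcal L$-component. Composing $\eta_M$ with this identification gives the canonical surjection $M\to M/\mathcal LM$. If $\eta_M$ is an isomorphism, this surjection is injective, so $\mathcal LM=0$.

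Should that compatibility check prove awkward, I have a fallback. From the isomorphism $M\cong\Lambda_\Phi(M)$ and the fact $\mathcal L\Lambda_\Phi(M)=0$ in Proposition~\ref{dualidealsyst1}, I get $\mathcal LM=0$, since annihilation by an ideal transfers across any $R$-isomorphism.

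Finally, the ``moreover'' clause is read off from these steps. $\Gamma_\Phi(M)=M$ comes from $(3)\Rightarrow(1)$, and $\Lambda_\Phi(M)\cong M$ via $\eta_M$ comes from $(3)\Rightarrow(2)$. Naturality follows because $\eta$ is a natural transformation.
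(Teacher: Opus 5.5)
Your proposal is correct and follows essentially the same route as the paper: $(1)\Leftrightarrow(3)$ via $\Gamma_\Phi(M)=(0:_M\mathcal L)$ and $\mathcal L\in\Phi$, and $(2)\Leftrightarrow(3)$ via the constant inverse system, with $\eta_M$ identified with the quotient map $M\to M/\mathcal LM$. Your explicit use of directedness, and your fallback that transfers $\mathcal L\Lambda_\Phi(M)=0$ across the isomorphism $\eta_M$, are both valid and supply the compatibility check that the paper only asserts.
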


\begin{prf}
We prove $(1)\Longleftrightarrow(3)\Longleftrightarrow(2)$.
\medskip
\noindent
\textbf{$(1)\Rightarrow(3)$.}
Suppose that $M$ is both $\Phi$-torsion and $\Phi$-reduced. Since $M$ is $\Phi$-torsion, $\Gamma_\Phi(M)=M$. Since $M$ is $\Phi$-reduced, Proposition~\ref{p2} gives $\Gamma_\Phi(M)=(0:_M\mathcal L)$. Consequently, $M=(0:_M\mathcal L)$, and hence
$\mathcal LM=0$. 
\medskip
\noindent
\textbf{$(3)\Rightarrow(1)$.}
Suppose that $\mathcal LM=0$. Since $\mathcal A\subseteq\mathcal L$ for every $\mathcal A\in\Phi$, we have $\mathcal AM=0$ for every $\mathcal A\in\Phi$. Thus, for every $m\in M$ and every $\mathcal A\in\Phi$, $\mathcal Am=0 \quad\Longrightarrow\quad \mathcal Lm=0$. 
Therefore $M$ is $\Phi$-reduced. Moreover, since $\mathcal L\in\Phi$ and $\mathcal LM=0$, every $m\in M$ belongs to $(0:_M\mathcal L)\subseteq\Gamma_\Phi(M)$. 
Hence $M\subseteq\Gamma_\Phi(M)\subseteq M$, and therefore $\Gamma_\Phi(M)=M$. 
Thus $M$ is $\Phi$-torsion. Hence (1) holds.

\medskip
\noindent
\textbf{$(2)\Rightarrow(3)$.}
Suppose that $M$ is both $\Phi$-adically complete and $\Phi$-coreduced. Since $M$ is $\Phi$-coreduced, Proposition~\ref{dualidealsyst1} gives a natural isomorphism $\Lambda_\Phi(M)\cong M/\mathcal LM$.
Under this natural identification, the canonical completion morphism

$$
\eta_M:M\longrightarrow\Lambda_\Phi(M)
$$

corresponds to the canonical quotient morphism

$$
q:M\longrightarrow M/\mathcal LM,
\qquad
m\longmapsto m+\mathcal LM.
$$

Since $M$ is $\Phi$-adically complete, $\eta_M$ is an isomorphism. Hence the quotient map $q$ is an isomorphism. In particular, $\ker(q)=\mathcal LM=0$. Therefore $\mathcal LM=0$. 
\medskip
\noindent
\textbf{$(3)\Rightarrow(2)$.}
Suppose that $\mathcal LM=0$. Since $\mathcal A\subseteq\mathcal L$ for every $\mathcal A\in\Phi$, it follows that $\mathcal AM=0$ 
for every $\mathcal A\in\Phi$. Hence $\mathcal AM=\mathcal LM=0$ for every $\mathcal A\in\Phi$, so $M$ is $\Phi$-coreduced. Furthermore, $M/\mathcal AM=M$ for every $\mathcal A\in\Phi$. Thus the inverse system $\left\{M/\mathcal AM\right\}_{\mathcal A\in\Phi}$ is canonically the constant inverse system with value $M$ and identity transition maps. Consequently, $\Lambda_\Phi(M)
=
\varprojlim_{\mathcal A\in\Phi}M/\mathcal AM \cong M$. Under this identification, the canonical completion morphism 
$\eta_M:M\longrightarrow\Lambda_\Phi(M)$ is the identity morphism. Hence $\eta_M$ is an isomorphism, and therefore $M$ is $\Phi$-adically complete. Thus (2) holds.  
\end{prf}

\section{The torsion theory induced by $\Phi$-reduced and $\Phi$-coreduced modules}

\begin{paragraph}\noi
 Throughout this section, let $\Phi$ be a system of ideals of a ring $R$.  For an abelian category $\mathscr{C}$, a \textit{torsion theory} $\tau$ is a pair $(\mathscr{T}, \mathscr{F})$ of classes of objects of $\mathscr{C}$ such that:
\begin{enumerate}
\item $\operatorname{Hom}(T, F) = 0$ for all $T \in \mathscr{T}$, $F \in \mathscr{F}$;
\item if $\operatorname{Hom}(A, F) = 0$ for all $F \in \mathscr{F}$, then $A \in \mathscr{T}$;
\item if $\operatorname{Hom}(T, B) = 0$ for all $T \in \mathscr{T}$, then $B \in \mathscr{F}$.
\end{enumerate}
The class $\mathscr{T}$ is called the \emph{torsion class} and its objects are called \emph{torsion objects}; the class $\mathscr{F}$ is called the \emph{torsionfree class} and its objects are called \emph{torsionfree objects}. A torsion theory $(\mathscr{T}, \mathscr{F})$ is called \emph{hereditary} if its torsion class $\mathscr{T}$ is closed under sub-objects.
\end{paragraph}
\begin{defn}\rm{\cite[Definition 2.1.10]{Brodmann2013}}\label{systofidls}
 Let $(\Lambda,\leq)$ be a non-empty directed partially ordered set.  A {\it  system of ideals} of a ring $R$ over $\Lambda$ is  an  inverse family of ideals of $R$, denoted by $\{I_i\}_{i \in \Lambda}$ with an additional   property that for every $\alpha,\beta \in \Lambda$, there exists $\delta \in \Lambda$ with $\delta\geq\alpha$ and $\delta\geq\beta$  such that $I_{\delta}\subseteq I_ {\alpha}I_ {\beta}$.
\end{defn}

\begin{paragraph}\noi
    The additional multiplicative property of a system of ideals allows one to relate products of ideals to members of an inverse family, a feature that is indispensable in establishing the radical and torsion theory properties developed in this section. Systems of ideals have played a fundamental role in the development of local cohomology theories. In \cite{Brodmann2013},  a system of ideals $\Phi$ was required both for the radicality of the functor $\Gamma_\Phi$ on $R\text{-Mod}$ with $R$ Noetherian, see \cite[Exercise 2.1.13(1)]{Brodmann2013} and for the right derived functor of $\Gamma_\Phi$ to mimic the theory of local cohomology completely; the analogue of \cite[Exercise 2.1.4]{Brodmann2013} fails unless $\Phi$ is a system of ideals.
\end{paragraph}

\begin{exam}\rm 
Let \(R\) be a principal ideal domain, \(a\in R\) a non‑unit, and let \(f:\mathbb{N}\to\mathbb{N}\) be an increasing function such that \(f(m+n)\ge f(m)+f(n)\) for all \(m,n\in\mathbb{N}\) (e.g., \(f(n)=n\), \(f(n)=cn\) with \(c\ge1\), or \(f(n)=n^2\)). Set \(\Lambda = \mathbb{N}\) with the usual order and define $I_n := (a^{f(n)})$.
For any \(\alpha,\beta\in\mathbb{N}\) take \(\delta = \alpha+\beta\). Then
$I_\delta = (a^{f(\alpha+\beta)}) \subseteq (a^{f(\alpha)+f(\beta)}) = I_\alpha I_\beta$, and because \(f\) is increasing, \(I_n\) is decreasing. Thus, \(\{I_n\}_{n\in\mathbb{N}}\) is a system of ideals of $R$.
\end{exam}

\begin{lem}\rm\label{lem1}
  Let $\Phi$ be a system of ideals of a ring $R$ having a greatest proper member $\mathcal{L}$ with respect to inclusion.\begin{enumerate}
      \item If $M$ is a $\Phi$-reduced $R$-module, then $M$ is $\mathcal{L}$-reduced.
      \item If $M$ is a $\Phi$-coreduced $R$-module then, $M$ is $\mathcal{L}$-coreduced.
      
  \end{enumerate} 
\end{lem}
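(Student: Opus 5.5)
The idea is to use the multiplicative property of a system of ideals to find a member of $\Phi$ sitting inside $\mathcal L^2$. Then each of the two stabilization conditions for $\Phi$ transfers to the corresponding condition for the pair $\mathcal L\supseteq\mathcal L^2$.

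\textbf{Step 1: a member of $\Phi$ inside $\mathcal L^2$.} Write $\mathcal L=I_\alpha$ for some $\alpha\in\Lambda$. By Definition~\ref{systofidls}, applied with $\beta=\alpha$, there is $\delta\in\Lambda$ with $\delta\ge\alpha$ and $I_\delta\subseteq I_\alpha I_\alpha=\mathcal L^2$. Set $\mathcal A:=I_\delta\in\Phi$. Since $\mathcal A\subseteq\mathcal L$, we have $\mathcal A\subseteq \mathcal L^2\subseteq\mathcal L$.

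\textbf{Step 2: the reduced case.} Let $M$ be $\Phi$-reduced and suppose $\mathcal L^2m=0$. Then $\mathcal Am\subseteq\mathcal L^2m=0$. Since $\mathcal A\in\Phi$, the definition of $\Phi$-reducedness gives $\mathcal Lm=0$. So $M$ is $\mathcal L$-reduced. Equivalently, by Proposition~\ref{p2}, $(0:_M\mathcal L)\subseteq(0:_M\mathcal L^2)\subseteq(0:_M\mathcal A)=(0:_M\mathcal L)$, which is condition (2) of Proposition~\ref{p3} for $I=\mathcal L$.

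\textbf{Step 3: the coreduced case.} Let $M$ be $\Phi$-coreduced. Then $\mathcal LM=\mathcal AM\subseteq\mathcal L^2M\subseteq\mathcal LM$, so $\mathcal L^2M=\mathcal LM$. Hence $M$ is $\mathcal L$-coreduced.

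The only point requiring care is Step~1: the defining property of a system of ideals must be applied with $\beta=\alpha$. This is where the hypothesis that $\Phi$ is a system of ideals, and not merely an inverse family, is essential. Examples~\ref{Phi-red} and~\ref{phi-core} show the conclusion can fail otherwise.
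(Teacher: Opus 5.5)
Your proof is correct and matches the paper's argument step for step. The paper likewise uses the system-of-ideals property to get $\mathcal A\in\Phi$ with $\mathcal A\subseteq\mathcal L^2$, then argues via $\mathcal Am\subseteq\mathcal L^2m=0$ in the reduced case and via $\mathcal LM=\mathcal AM\subseteq\mathcal L^2M\subseteq\mathcal LM$ in the coreduced case. You also make explicit the application of the defining property with $\beta=\alpha$, which the paper leaves implicit, and your remark that Examples~\ref{Phi-red} and~\ref{phi-core} (which are not systems of ideals) show the hypothesis is needed is accurate.
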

\begin{prf}\rm
\begin{enumerate}
    \item Suppose that $\mathcal{L}^2m=0$ for some $m\in M$. Since $\Phi$ is a system of ideals of a ring $R$ and $\mathcal{L}\in \Phi$, there exists an ideal $\mathcal{A}\in \Phi$ such that $\mathcal{A}\subseteq \mathcal{L}^2$. This implies that $\mathcal{A}m\subseteq \mathcal{L}^2m=0$. So, $\mathcal{A}m=0$. Since $M$ is $\Phi$-reduced,  $\mathcal{L}m=0$. Hence, $M$ is $\mathcal{L}$-reduced.

 \item Since $\Phi$ is a system of ideals of a ring $R$ and $\mathcal{L}\in \Phi$, there exists an ideal $\mathcal{A}\in \Phi$ such that $\mathcal{A}\subseteq \mathcal{L}^2$. From the fact that $M$ is $\Phi$-coreduced, $\mathcal{A}M=\mathcal{L}M$. Hence, $\mathcal{L}M=\mathcal{A}M\subseteq\mathcal{L}^2M\subseteq\mathcal{L}M$. Therefore, $M$ is $\mathcal{L}$-coreduced.
  \end{enumerate}
\end{prf}
\begin{exam}\rm
   Let $R$ be  the  polynomial ring $k[x, y]$  over a field $k$ and $\Phi$  be an inverse family $\{ (x, y^{n}) ~:~n\geq 1\}$ of ideals of $R$ with $\mathcal{L}=(x,y)$. Notice that this inverse family is not a system of ideals. Let $M=R/(x-y,x^2)$ be a $k[x,y]$-module. Since $\mathcal{A}M=\mathcal{L}M$ for every $\mathcal{A}\in \Phi$, $M$ is $\Phi$-coreduced and $\mathcal{L}^2M\neq \mathcal{L}M$ implies that $M$ is not $\mathcal{L}$-coreduced. Also for a $k[x,y]$-module $R/(x^2,xy,y^2)$, $(0:_M \mathcal{A})=(0:_M \mathcal{L})=(x,y)M$ but $\mathcal{L}^2M=0$ while $\mathcal{L}M\neq 0$. Hence $M$ is $\Phi$-reduced but not $\mathcal{L}$-reduced.
\end{exam}
\begin{defn}\rm
A functor $\sigma:R\text{-Mod}\longrightarrow R\text{-Mod}$, $M\mapsto \sigma(M)$, where $\sigma(M)$ is a submodule of $M$, is a:
\begin{enumerate}
    \item \textit{preradical} if for every $R$-module homomorphism $f:A\rightarrow B$, $f(\sigma(A))\subseteq \sigma(B)$.
    \item \textit{radical} if it is a preradical and for any $M\in R\text{-Mod}$, $\sigma(M/\sigma(M))=0$.
\end{enumerate}
    
\end{defn}

\begin{defn}\rm 
     A class $\mathcal{S}$ of $R$-modules is called a \textit{Serre subcategory} of the category of $R$-modules if it is closed under taking submodules, quotients, and extensions. 
     \begin{paragraph}\noi
      Serre subcategories are fundamental in homological algebra. They allow the construction of quotient categories and are closely related to torsion theories.   
     \end{paragraph}
\end{defn}
\begin{propn}\rm\label{radicality}
    Let $\Phi$ be a system of ideals of a ring $R$ over $\Lambda$ and let $\mathcal{L}\in\Phi$ be a greatest proper member of $\Phi$ with respect to inclusion. If $\mathcal{A}_\Phi$ is a Serre subcategory of $R\text{-Mod}$ consisting of $\Phi$-reduced $R$-modules, then the functor $\Gamma_\Phi:\mathcal{A}_\Phi \rightarrow \mathcal{A}_\Phi$ is a left exact radical.
     \end{propn}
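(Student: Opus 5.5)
The plan is to work entirely inside $\mathcal{A}_\Phi$ and use Proposition~\ref{p2} throughout. By that proposition, for every $M\in\mathcal{A}_\Phi$ we have $\Gamma_\Phi(M)=(0:_M\mathcal L)$. The first step is to check that $\Gamma_\Phi$ is a well-defined endofunctor of $\mathcal{A}_\Phi$. For $M\in\mathcal{A}_\Phi$, the module $\Gamma_\Phi(M)$ is a submodule of $M$, and $\mathcal{A}_\Phi$ is closed under submodules because it is Serre. Hence $\Gamma_\Phi(M)\in\mathcal{A}_\Phi$. On morphisms $f:A\to B$ we take the restriction of $f$. The preradical property is immediate: if $\mathcal A a=0$ with $\mathcal A\in\Phi$, then $\mathcal A f(a)=f(\mathcal A a)=0$, so $f(\Gamma_\Phi(A))\subseteq\Gamma_\Phi(B)$. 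This holds for arbitrary modules and needs no reducedness.

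The second step is left exactness. Take an exact sequence $0\to A\xrightarrow{f} B\xrightarrow{g} C$ in $\mathcal{A}_\Phi$. Since $\Gamma_\Phi(M)=(0:_M\mathcal L)\cong\operatorname{Hom}_R(R/\mathcal L,M)$ naturally on $\mathcal{A}_\Phi$, the restricted functor is isomorphic to $\operatorname{Hom}_R(R/\mathcal L,-)$ there, and the latter is left exact. Alternatively, I would verify this by hand. Injectivity of $f$ restricted to $\Gamma_\Phi(A)$ is clear. If $b\in\Gamma_\Phi(B)$ and $g(b)=0$, then $b=f(a)$, and $\mathcal A f(a)=0$ gives $f(\mathcal A a)=0$, so $\mathcal A a=0$ by injectivity of $f$. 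Hence $a\in\Gamma_\Phi(A)$, and this direct check also needs no hypothesis.

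The main step is the radical property $\Gamma_\Phi(M/\Gamma_\Phi(M))=0$ for $M\in\mathcal{A}_\Phi$. Note first that $M/\Gamma_\Phi(M)\in\mathcal{A}_\Phi$ because $\mathcal{A}_\Phi$ is closed under quotients. Take $\bar m\in\Gamma_\Phi(M/\Gamma_\Phi(M))$, so $\mathcal A m\subseteq\Gamma_\Phi(M)=(0:_M\mathcal L)$ for some $\mathcal A\in\Phi$, and hence $\mathcal L\mathcal A m=0$. This is where the system-of-ideals axiom enters: there is $\delta$ with $I_\delta\subseteq\mathcal L\mathcal A$ (taking $\alpha,\beta$ the indices of $\mathcal L$ and $\mathcal A$). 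Therefore $I_\delta m=0$, so $m\in\Gamma_\Phi(M)$ and $\bar m=0$.

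I expect this last step to be the only real obstacle. It needs the multiplicative condition to turn the product $\mathcal L\mathcal A$ into a member of $\Phi$, and the reducedness of $M$ to identify $\Gamma_\Phi(M)$ with $(0:_M\mathcal L)$. The Serre hypothesis is what keeps all the intermediate objects, such as $\Gamma_\Phi(M)$ and $M/\Gamma_\Phi(M)$, inside $\mathcal{A}_\Phi$, so that the statement makes sense as a functor $\mathcal{A}_\Phi\to\mathcal{A}_\Phi$.
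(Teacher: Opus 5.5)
Your proposal is correct and follows the paper's proof essentially step for step. The Serre hypothesis gives well-definedness, left exactness comes from $\Gamma_\Phi(-)=(0:_-\mathcal L)\cong\operatorname{Hom}_R(R/\mathcal L,-)$, and the radical property is obtained exactly as in the paper, passing from $\mathcal L\mathcal A m=0$ to $I_\delta m=0$ via the system-of-ideals axiom. Your one deviation is to check the preradical property and left exactness directly from the definition of $\Gamma_\Phi$, so that reducedness is used only in the radical step; this is valid and slightly cleaner than the paper's route through $(0:_-\mathcal L)$.
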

    \begin{prf}
    For every $M\in \mathcal{A}_\Phi$, $\Gamma_\Phi(M)$ is a submodule of $M$. Since the Serre subcategory $\mathcal{A}_\Phi$ is closed under submodules, $\Gamma_\Phi(M)\in \mathcal{A}_\Phi$. Thus the functor  $\Gamma_\Phi:\mathcal{A}_\Phi \rightarrow \mathcal{A}_\Phi  $ exists. Since by Proposition \ref{p2}(5), $\Gamma_\Phi(-)=(0:_- \mathcal{L})\cong \text{Hom}_R(R/\mathcal{L},-)$ where $\text{Hom}_R(R/\mathcal{L},-)$ is a left exact functor, the functor $\Gamma_\Phi$ is left exact. 
Let $M$ and $N$ be $\Phi$-reduced modules in $\mathcal{A}_\Phi$  and $f:M\longrightarrow N$ be an $R$-homomorphism. By Proposition \ref{p2}(5), $M$ $\Phi$-reduced implies that $\Gamma_\Phi(M)=(0:_M \mathcal{L})$. We first show that $f(0:_M \mathcal{L})\subseteq (0:_N \mathcal{L})$. Take $m\in (0:_M \mathcal{L})$ then, $\mathcal{L}m=0$ and $\mathcal{L}f(m)=f(\mathcal{L}m)=0$. Hence $f(m)\in (0:_N \mathcal{L})$ and $f(0:_M \mathcal{L})\subseteq (0:_N \mathcal{L})$. Therefore, the functor $\Gamma_\Phi$ is a preradical. Now let $\Bar{m}\in \Gamma_\Phi\left( M/\Gamma_\Phi(M)\right)$. By Definition of the submodule $\Gamma_\Phi(M)$, there exists $I_\alpha \in \Phi$ such that $I_\alpha \bar{m}=\bar{0}$. This implies that $I_\alpha m \subseteq \Gamma_\Phi(M)$. For $M \in \mathcal{A}_\Phi$ is $\Phi$-reduced, by Proposition \ref{p2}, $\Gamma_\Phi(M)=(0:_M \mathcal{L})$. Then, $I_\alpha m \subseteq (0:_M \mathcal{L})$ so that $\mathcal{L}I_\alpha m=0$. Since $\Phi$ is a system of ideals of $R$ over $\Lambda$, there exists $\delta \in \Lambda$ such that $I_\delta \subseteq \mathcal{L}I_\alpha$. So, $I_\delta m=0$ and $m\in \Gamma_\Phi(M)$. Hence, $\Bar{m}=0$. Therefore, the functor $\Gamma_\Phi$ restricted to $\mathcal{A}_\Phi$ is a radical.
\end{prf}

\begin{paragraph}\noi
Example \ref{ex-prop} shows that if we replace the assumption that $\Phi$ is a system of ideals in Proposition \ref{radicality} with an inverse family of ideals, the functor need not be a radical.
\end{paragraph}

\begin{exam}\rm \label{ex-prop}
Let $R:=k[x,y]$ be a polynomial ring over a field $k$ and let $M:= k[x,y]/(x^2,y-1)$ be an $R$-module. Define $\Phi:=\{(xy^{n-1})~:~{n\geq 1}\}$ to be an inverse family of ideals of $R$. $\Phi$ is not a system of ideals. Since $(x^2=0, y=1) \in M$, elements in $M$ are of the form $a_0+a_1x$, with $a_0,a_1\in k$ and $\Gamma_\Phi(M)=(0:_M (x))=a_1(x)$. Also, $M/\Gamma_\Phi(M)\cong k$. Therefore $\Gamma_\Phi(M/\Gamma_\Phi(M))=k\neq 0$. 
    \end{exam}
\begin{paragraph}\noi
  Recall that, in \cite{Jans1965}, Jans showed that  a map $I\mapsto T_I$ defines a one to one correspondence between  an idempotent ideal  $I$ of a ring $R$ and the torsion-torsion free (TTF) class $T_I$ given by $T_I:=\{M\in R\text{-Mod}~|~IM=0\}$. This is called Jans' correspondence; see \cite[Corollary 2.2]{Jans1965}.  Jans' correspondence was generalized in \cite[Theorem 3.2]{SsevviiriII2026}, in which torsion theories associated to a fixed ideal $I$ that is flat as an $R$-module were developed on Serre subcategories consisting of $I$-reduced and $I$-coreduced $R$-modules. Theorem \ref{radicality} generalizes these results with respect to a fixed ideal $I$ to an arbitrary system of ideals $\Phi$ and develops torsion theories on Serre subcategories consisting of $\Phi$-reduced and $\Phi$-coreduced $R$-modules.
\end{paragraph}

\begin{thm}\label{TTF-theorem}
Let \(\Phi\) be a system of ideals of a ring \(R\), and let \(\mathcal L\in\Phi\) be a greatest proper member of \(\Phi\) with respect to inclusion. Let \(\mathcal A_\Phi\) and \(\mathcal B_\Phi\) be Serre subcategories of \(R\)-Mod consisting of \(\Phi\)-reduced and \(\Phi\)-coreduced \(R\)-modules, respectively.

Assume that  $$ \{M\in\mathcal A_\Phi\mid \mathcal LM=0\}
= \{M\in\mathcal B_\Phi\mid \mathcal LM=0\},$$

and denote this common class by $\mathscr T_\Phi$. Define

$$
\mathscr F_\Phi
=
\{M\in\mathcal A_\Phi\mid (0:_M\mathcal L)=0\}
$$

and

$$
\mathfrak C_\Phi
=
\{M\in\mathcal B_\Phi\mid \mathcal LM=M\}.
$$

Then:
\begin{enumerate}
    \item the pair \((\mathscr T_\Phi,\mathscr F_\Phi)\) forms a hereditary torsion theory in the Serre subcategory \(\mathcal A_\Phi\);
    \item   the pair \((\mathfrak C_\Phi,\mathscr T_\Phi)\) forms a torsion theory in the Serre subcategory \(\mathcal B_\Phi\);
    \item  consequently, \(\mathscr T_\Phi\) is simultaneously a torsion class in the torsion theory \((\mathscr T_\Phi,\mathscr F_\Phi)\) on \(\mathcal A_\Phi\) and a torsionfree class in the torsion theory \((\mathfrak C_\Phi,\mathscr T_\Phi)\) on \(\mathcal B_\Phi\).
\end{enumerate}    
\end{thm}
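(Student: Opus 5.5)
To prove (1), I will verify the three torsion-theory axioms inside $\mathcal A_\Phi$ directly, using $\Gamma_\Phi$ as the torsion radical. By Proposition~\ref{radicality}, $\Gamma_\Phi$ restricted to $\mathcal A_\Phi$ is a left exact radical. By Proposition~\ref{p2}(5), it equals $(0:_-\mathcal L)$ there.

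\textbf{Hom-orthogonality.} If $T\in\mathscr T_\Phi$, $F\in\mathscr F_\Phi$ and $f:T\to F$, then $\mathcal L f(T)=f(\mathcal LT)=0$. Hence $f(T)\subseteq(0:_F\mathcal L)=0$.

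\textbf{Axiom (2).} Let $A\in\mathcal A_\Phi$ with $\operatorname{Hom}(A,F)=0$ for all $F\in\mathscr F_\Phi$. Consider $A/\Gamma_\Phi(A)$. It lies in $\mathcal A_\Phi$, since the category is Serre. By radicality $\Gamma_\Phi(A/\Gamma_\Phi(A))=0$, i.e.\ $(0:_{A/\Gamma_\Phi(A)}\mathcal L)=0$, so it lies in $\mathscr F_\Phi$. The projection $A\to A/\Gamma_\Phi(A)$ is then zero, giving $A=\Gamma_\Phi(A)=(0:_A\mathcal L)$. Thus $\mathcal LA=0$ and $A\in\mathscr T_\Phi$.

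\textbf{Axiom (3).} Let $B\in\mathcal A_\Phi$ with $\operatorname{Hom}(T,B)=0$ for all $T\in\mathscr T_\Phi$. Then $(0:_B\mathcal L)$ is a submodule of $B$, hence in $\mathcal A_\Phi$, and it is killed by $\mathcal L$. So it lies in $\mathscr T_\Phi$, which I read as the $\mathcal A_\Phi$-description of the common class. Its inclusion into $B$ is therefore zero, so $(0:_B\mathcal L)=0$ and $B\in\mathscr F_\Phi$.

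\textbf{Hereditary.} Submodules of modules annihilated by $\mathcal L$ are annihilated by $\mathcal L$, and $\mathcal A_\Phi$ is closed under submodules.

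For (2) I work in $\mathcal B_\Phi$, now using the $\mathcal B_\Phi$-description of $\mathscr T_\Phi$, which the hypothesis permits.

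\textbf{Orthogonality.} If $C\in\mathfrak C_\Phi$, $T\in\mathscr T_\Phi$ and $f:C\to T$, then $f(C)=f(\mathcal LC)=\mathcal Lf(C)\subseteq\mathcal LT=0$.

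\textbf{Torsion-free side.} Suppose $B\in\mathcal B_\Phi$ has $\operatorname{Hom}(C,B)=0$ for all $C\in\mathfrak C_\Phi$. I want to use $\mathcal LB\in\mathfrak C_\Phi$. It is in $\mathcal B_\Phi$ as a submodule. The key point is $\mathcal L(\mathcal LB)=\mathcal L^2B=\mathcal LB$, by Lemma~\ref{lem1}(2), since $B$ is $\Phi$-coreduced; this uses the system-of-ideals hypothesis. The inclusion $\mathcal LB\hookrightarrow B$ is then zero, so $\mathcal LB=0$ and $B\in\mathscr T_\Phi$.

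\textbf{Torsion side.} Suppose $A\in\mathcal B_\Phi$ has $\operatorname{Hom}(A,T)=0$ for all $T\in\mathscr T_\Phi$. The quotient $A/\mathcal LA$ lies in $\mathcal B_\Phi$ and is annihilated by $\mathcal L$, so it is in $\mathscr T_\Phi$. The projection $A\to A/\mathcal LA$ is therefore zero, so $A=\mathcal LA$ and $A\in\mathfrak C_\Phi$.

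Statement (3) is then immediate from (1) and (2).

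I expect the main obstacle to be ensuring that the test objects $A/\Gamma_\Phi(A)$, $(0:_B\mathcal L)$, $\mathcal LB$ and $A/\mathcal LA$ lie in the right category and class. This is handled by the Serre closure, the equality hypothesis on $\mathscr T_\Phi$, and two system-of-ideals inputs: radicality in Proposition~\ref{radicality} and $\mathcal L$-coreducedness in Lemma~\ref{lem1}.
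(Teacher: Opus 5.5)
Your proof is correct. It uses the same ingredients as the paper: Proposition~\ref{radicality} with Proposition~\ref{p2}(5) for part (1), and for part (2) the submodule $\mathcal LM\in\mathfrak C_\Phi$ (via Lemma~\ref{lem1}(2)) and the quotient $M/\mathcal LM\in\mathscr T_\Phi$ (via the hypothesis on the common class). The only difference is packaging: the paper cites the correspondence between left exact radicals and hereditary torsion theories, and the canonical sequence $0\to\mathcal LM\to M\to M/\mathcal LM\to 0$, whereas you check the three Hom-axioms directly using these same objects as test modules.
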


\begin{prf} 
  \begin{enumerate}
      \item      By Proposition \ref{radicality}, the functor   $\Gamma_\Phi:\mathcal A_\Phi\longrightarrow\mathcal A_\Phi$ is a left exact radical. Therefore, by the standard correspondence between left exact radicals and hereditary torsion theories, it induces a hereditary torsion theory on \(\mathcal A_\Phi\). We identify its torsion and torsion-free classes. For every \(M\in\mathcal A_\Phi\), the module \(M\) is \(\Phi\)-reduced. Hence $\Gamma_\Phi(M)=(0:_M\mathcal L).$ 
Therefore $\Gamma_\Phi(M)=M$ if and only if $
(0:_M\mathcal L)=M$, which is equivalent to $
\mathcal LM=0$. Thus the torsion class associated with \(\Gamma_\Phi\) is precisely $\mathscr T_\Phi$. 
Similarly, $\Gamma_\Phi(M)=0$ if and only if $(0:_M\mathcal L)=0$. 
Hence the corresponding torsion-free class is precisely $\mathscr F_\Phi$.  It follows that $(\mathscr T_\Phi,\mathscr F_\Phi)$ 
is a hereditary torsion theory in \(\mathcal A_\Phi\). We prove that  \(\mathscr T_\Phi\) is closed under extensions. Suppose that
$$
0\longrightarrow M_1\longrightarrow M_2\longrightarrow M_3\longrightarrow0
$$

is a short exact sequence in \(\mathcal A_\Phi\), with \(M_1,M_3\in\mathscr T_\Phi\). Thus
$\mathcal LM_1=0$ and $\mathcal LM_3=0$.  For \(m\in M_2\), the image of \(\mathcal Lm\) in \(M_3\) is zero because \(\mathcal LM_3=0\). Hence $\mathcal Lm\subseteq M_1$. 
Since \(\mathcal LM_1=0\), it follows that $\mathcal L^2m=0$. 
Now \(M_2\) is \(\Phi\)-reduced, and since \(\Phi\) is a system of ideals, Lemma \ref{lem1} implies that \(M_2\) is \(\mathcal L\)-reduced. Therefore $\mathcal L^2m=0$ implies that $\mathcal Lm=0$. 
Thus $\mathcal LM_2=0$, so \(M_2\in\mathscr T_\Phi\). This proves extension closure. 

 \item  We now show that $(\mathfrak C_\Phi,\mathscr T_\Phi)$ 
is a torsion theory in \(\mathcal B_\Phi\). First, let \(X\in\mathfrak C_\Phi\) and \(Y\in\mathscr T_\Phi\). Thus $\mathcal LX=X$  and $\mathcal LY=0$. If $ f:X\longrightarrow Y$ 
is an \(R\)-module homomorphism, then $f(X)=f(\mathcal LX)=\mathcal Lf(X)\subseteq\mathcal LY=0$. Hence \(f=0\). Therefore $\operatorname{Hom}_R(\mathfrak C_\Phi,\mathscr T_\Phi)=0$. 
Next, let \(M\in\mathcal B_\Phi\). Since \(M\) is \(\Phi\)-coreduced and \(\Phi\) is a system of ideals, Lemma \ref{lem1} implies that \(M\) is \(\mathcal L\)-coreduced. Consequently, 
$\mathcal L^2M=\mathcal LM$. Consider the canonical short exact sequence

$$
0\longrightarrow\mathcal LM
\longrightarrow M
\longrightarrow M/\mathcal LM
\longrightarrow0.
$$

Since \(\mathcal B_\Phi\) is a Serre subcategory and \(M\in\mathcal B_\Phi\), both \(\mathcal LM\) and \(M/\mathcal LM\) belong to \(\mathcal B_\Phi\). Furthermore, $\mathcal L(\mathcal LM) = \mathcal L^2M
= \mathcal LM$.  Hence $\mathcal LM\in\mathfrak C_\Phi$. 
On the other hand, $\mathcal L(M/\mathcal LM)=0$. 
Therefore $M/\mathcal LM \in
\{N\in\mathcal B_\Phi\mid\mathcal LN=0\}$. 
By the hypothesis identifying the \(\mathcal L\)-annihilated objects of \(\mathcal A_\Phi\) and \(\mathcal B_\Phi\), this class is \(\mathscr T_\Phi\). Thus $M/\mathcal LM\in\mathscr T_\Phi$. 
Consequently every \(M\in\mathcal B_\Phi\) fits into a short exact sequence

$$
0\longrightarrow T_M
\longrightarrow M
\longrightarrow F_M
\longrightarrow0,
$$

where $ T_M=\mathcal LM\in\mathfrak C_\Phi$  and $F_M=M/\mathcal LM\in\mathscr T_\Phi$.  Together with $\operatorname{Hom}_R(\mathfrak C_\Phi,\mathscr T_\Phi)=0$, this proves that $(\mathfrak C_\Phi,\mathscr T_\Phi)$ is a torsion theory in \(\mathcal B_\Phi\). For completeness, the extension closure of \(\mathfrak C_\Phi\) is also proved. Suppose that  $0\longrightarrow X\longrightarrow Y\longrightarrow Z\longrightarrow0 $ 
is exact in \(\mathcal B_\Phi\), with $\mathcal LX=X$ and $\mathcal LZ=Z$.
Let \(y\in Y\), and let \(\bar y\) denote its image in \(Z\). Since \(\mathcal LZ=Z\), we can write $\bar y=l_1z_1+\cdots+l_nz_n$ for some \(l_i\in\mathcal L\) and \(z_i\in Z\). Choose \(y_i\in Y\) mapping to \(z_i\). Then $y-\sum_i l_iy_i$ 
lies in \(X\). Since \(X=\mathcal LX\), this element belongs to \(\mathcal LY\). Also, $\sum_i l_iy_i\in\mathcal LY$. Hence \(y\in\mathcal LY\). Since \(y\) was arbitrary, $Y=\mathcal LY$. Therefore \(Y\in\mathfrak C_\Phi\), proving that \(\mathfrak C_\Phi\) is closed under extensions.

\item  Part (1) shows that 
$\mathscr T_\Phi$ is the torsion class of the hereditary torsion theory $(\mathscr T_\Phi,\mathscr F_\Phi)$ in \(\mathcal A_\Phi\). Part (2) shows that the same class $\mathscr T_\Phi$ is the torsion-free class of the torsion theory $(\mathfrak C_\Phi,\mathscr T_\Phi)$ in \(\mathcal B_\Phi\). Thus \(\mathscr T_\Phi\) occurs simultaneously as a torsion class in one torsion theory and as a torsion-free class in the other. This establishes the desired torsion–torsionfree phenomenon.
 \end{enumerate}
\end{prf} 
\begin{exam}\rm
 Let  $R=k\times k\times k[x,y]$ be a ring  where $k$ is a field and $A_n=\Phi=\{0\times k\times(x^n,y^n)~|~n\geq 1\}$ be a system of ideals of $R$ with $\mathcal{L}=0 \times k\times (x,y)$ a greatest proper member in $\Phi$. Similarly, $R\text{-Mod}\cong  k\text{-vect}\times k\text{-vect}\times k[x,y]\text{-Mod}$. Define $\mathcal{A}_\Phi=\mathcal{B}_\Phi:=k\text{-vect}\times k\text{-vect}\times 0$. This is a  Serre subcategory of $R$-Mod. Objects in $\mathcal{A}_\Phi=\mathcal{B}_\Phi$ are of the form $M=(U, V, 0)$ where $U$ and $V$ are $k$-vector spaces. The ideal $\mathcal{L}$ acts on $M$ as, $\mathcal{L}M=(0, V, 0)$. Similarly for every $n\geq 1$, $(A_n)M=(0, V, 0)=\mathcal{L}M$. Hence every object of $\mathcal{A}_\Phi=\mathcal{B}_\Phi$ is both $\Phi$-reduced and $\Phi$-coreduced. The three classes in Theorem \ref{TTF-theorem} are as follows. First, $\mathscr{T}_\Phi=\{M\in \mathcal{A}_\Phi~|~ \mathcal{L}M=0\}$. For $M=(U, V, 0)$, $\mathcal{L}M=(0, V, 0)$. Thus $\mathcal{L}M=0$ if and only if $V=0$. Hence, $\mathscr{T}_\Phi=\{(U, 0, 0)~|~ U \in k\text{-vect}\}$. Second, an element $(u, v, 0)$ of $M$ is annihilated by $\mathcal{L}$ precisely when $V=0$. Therefore, $(0:_M \mathcal{L})=(U, 0, 0)$. Consequently, $(0:_M \mathcal{L})=0$ if and only if $U=0$. Hence, $\mathscr{F}_\Phi=\{(0, V, 0)~|~ V\in k\text{-vect}\}$. The first torsion theory is $(\mathscr{T}_\Phi,\mathscr{F}_\Phi)=(\{(U, 0, 0)\},\{(0, V, 0)\})$. By Theorem \ref{TTF-theorem} this is a hereditary torsion theory. Next, $\mathfrak{C}_\Phi=\{M\in \mathcal{B}_\Phi~|~ \mathcal{L}M=M\}$. Since $\mathcal{L}M=(0, V, 0)$, we have $\mathcal{L}M=M$ if and only if $U=0$. Hence, $\mathfrak{C}_\Phi=\{(0, V,0)~|~ V \in k\text{-vect}\}$. Therefore, the second torsion theory is $(\mathfrak{C}_\Phi,\mathscr{T}_\Phi)=(\{(0, V, 0)\},\{(U, 0, 0)\})$. Thus, the class $\mathscr{T}_\Phi$ occurs as the torsion class in $(\mathscr{T}_\Phi,\mathscr{F}_\Phi)=(\{(U, 0, 0)\},\{(0, V, 0)\})$ and as the torsionfree class in $(\mathfrak{C}_\Phi,\mathscr{T}_\Phi)=(\{(0, V, 0)\},\{(U, 0, 0)\})$.
\end{exam}

   \section*{Acknowledgment}
    \begin{paragraph}\noi
This paper results from a research visit by the first author to Makerere University in Uganda, supported by the European Mathematical Society, Committee for Developing Countries (EMS, CDC), and the University of Dodoma, Tanzania. 
   \end{paragraph}

\renewcommand{\bibname}{References}
\nocite{*}
\bibliographystyle{plain}
\bibliography{bib} 
\end{document}